\newcommand{\U}{{\mathcal U}}
\newcommand{\0}{{\mathbf 0}}
\newcommand{\C}{{\mathbb C}}
\newcommand{\Z}{{\mathbb Z}}
\newcommand{\Q}{{\mathbb Q}}
\newcommand{\Pdot}{\mathbf P^\bullet}
\newcommand{\coker}{{\operatorname{coker}}}
\newcommand{\mfp}{\mathfrak p}
\newcommand{\mult}{{\operatorname{mult}}}
\newtheorem{defn0}{Definition}[section]
\newtheorem{prop0}[defn0]{Proposition}
\newtheorem{conj0}[defn0]{Conjecture}
\newtheorem{thm0}[defn0]{Theorem}
\newtheorem{lem0}[defn0]{Lemma}
\newtheorem{corollary0}[defn0]{Corollary}
\newtheorem{example0}[defn0]{Example}
\newtheorem{remark0}[defn0]{Remark}
\newtheorem{question0}[defn0]{Question}
\newtheorem{exercise0}[defn0]{Exercise}
\newenvironment{prop}{\begin{prop0}}{\end{prop0}}
\newenvironment{thm}{\begin{thm0}}{\end{thm0}}
\newenvironment{exm}{\begin{example0}\rm}{\end{example0}}
\newenvironment{rem}{\begin{remark0}\rm}{\end{remark0}}
\newcommand{\propref}[1]{Proposition~\ref{#1}}
\newcommand{\thmref}[1]{Theorem~\ref{#1}}
\newcommand{\mbf}[1]{{\mathbf #1}}
\title{L\^e modules and hypersurfaces with one-dimensional singular sets}
\dedicatory{\hfill in memory of my mentor and friend, L\^e D\~ung Tr\'ang}
\subjclass[2020]{32S25, 32S15, 32S55}
\author{David B. Massey}
\date{}
\begin{document}

\begin{abstract} By using our previous results on L\^e modules and an upper-bound on the betti numbers which we proved with L\^e, we investigate the cohomology of Milnor fibers and the internal local systems given by the vanishing cycles of hypersurfaces with one-dimensional singular sets and small L\^e numbers.
\end{abstract}

\maketitle




\section{Introduction}

 \medskip
 
 Let $\U$ be a connected open neighborhood of the origin in $\C^{n+1}$, let $\mbf z:=(z_0, \dots, z_n)$ be coordinates on $\U$,  and let $f:(\U, \0)\rightarrow (\C, 0)$ be a non-constant, reduced complex analytic function. After possibly choosing $\U$ smaller, the critical locus, $\Sigma f$, of $f$ is equal to the singular set $\Sigma V(f)$ of the hypersurface $V(f):=f^{-1}(0)$. In this paper, we assume that $\dim_\0\Sigma f=1$ and  assume, possibly after an analytic change of coordinates, that $z_0$ is generic enough so that $\dim_\0\Sigma (f_{|_{V(z_0)}})=0$ and so that, for all irreducible components $C$ of $\Sigma f$ at $\0$, $\left(C\cdot V(z_0)\right)_\0={\operatorname{mult}}_\0C$.

\medskip
 
 In many places, we have discussed the L\^e numbers of hypersurface singularities; see, for instance, \cite{levar1}, \cite{levar2},  \cite{lecycles}, \cite{lemassey}, and \cite{lemodtrace}.  In our current setting, there are only two possibly non-zero L\^e numbers at the origin, $\lambda^0_{f, \mbf z}(\0)$ and $\lambda^1_{f, \mbf z}(\0)$, which we will henceforth denote simply as  $\lambda^0$ and $\lambda^1$; these are guaranteed to exist, since our genericity assumption on $z_0$ implies the coordinates $\mbf z$ are {\it prepolar} for $f$ at $\0$. In the next section, we will recall the definitions of $\lambda^0$ and $\lambda^1$, but here let us just say that $\lambda^0$ and $\lambda^1$ are effectively calculable, by hand in many examples and by computer in harder examples. 
 
 For specific low values of $\lambda^0$ and $\lambda^1$, we want to look at results on the reduced cohomology $\widetilde H^*()$ of the Milnor fiber $F_{f, \0}$, which can possibly be non-zero only in degrees $(n-1)$ and $n$.
 
 There will be subcases in which we also need to consider 
 $$
 r:= \textnormal{ the number of irreducible components of } \Sigma f \textnormal{ at }\0 \ \textnormal{ and }  \ m:={\operatorname{mult}}_\0|\Sigma f|,
 $$
 where the vertical lines indicate that we are considering $\Sigma f$ with its reduced structure, i.e., as a set.
 
 \medskip
 
 We give our results here in the introduction, but will defer the proofs until Section 3.
 
 \vskip 0.2in
 
 As we are assuming that $\dim_\0\Sigma f=1$, $\lambda^1\neq 0$, so let us start with the case:
 
 \smallskip
 
 \begin{prop}\label{prop:0=0} Suppose that $\lambda^0=0$.
 
 \smallskip
 
 Then, at $\0$, $\Sigma f$ is smooth, transversely intersected by $V(z_0)$, and  $f_{z_0}(z_1, \dots, z_n)=f(z_0, \dots, z_n)$ defines a 1-parameter family of isolated singularities along $C$ with constant Milnor number $\mu^\circ$.
 
Furthermore,
$$\widetilde H^{n-1}(F_{f, \mathbf 0};\ \Z)\cong \Z^{\mu^\circ}\hskip 0.2in\textnormal{ and }\hskip 0.2in\widetilde H^{n}(F_{f, \mathbf 0};\ \Z)=0.$$
 \end{prop}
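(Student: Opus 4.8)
The plan is to reduce the whole statement to the generic hyperplane slice $f_0:=f_{|_{V(z_0)}}$, using that, for prepolar coordinates, $\lambda^0$ counts exactly the $n$-cells one must attach to $F_{f_0,\0}$ in order to build $F_{f,\0}$. First I would record the geometric content of the hypothesis. Writing $\Gamma^1_{f,\mbf z}$ for the relative polar curve, the inductive description of the L\^e cycles gives $\Gamma^1_{f,\mbf z}\cdot V(\partial f/\partial z_0)=\Lambda^0_{f,\mbf z}+\Gamma^0_{f,\mbf z}$ with $\Gamma^0_{f,\mbf z}=\emptyset$, so $\lambda^0=\big(\Gamma^1_{f,\mbf z}\cdot V(\partial f/\partial z_0)\big)_\0$. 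Since every partial of $f$ except $\partial f/\partial z_0$ vanishes on $\Gamma^1_{f,\mbf z}$, a point of $\Gamma^1_{f,\mbf z}$ can lie on $V(\partial f/\partial z_0)$ only if it lies on $\Sigma f$; hence if $\Gamma^1_{f,\mbf z}$ passed through $\0$ (where $\partial f/\partial z_0$ vanishes, as $\0\in\Sigma f$) the intersection multiplicity there would be positive. Therefore $\lambda^0=0$ forces $\Gamma^1_{f,\mbf z}=\emptyset$ as a germ at $\0$.

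From this emptiness I would extract the structural assertions. An empty relative polar curve means that, off $\Sigma f$ and near $\0$, the map $(z_0,f)$ is a submersion, so no critical points of the slices $f_t:=f_{|_{V(z_0-t)}}$ can split off along a polar curve: the only critical points of $f_t$ near $\0$ are the points of $\Sigma f\cap V(z_0-t)$, which are isolated by the genericity hypothesis $\dim_\0\Sigma(f_{|_{V(z_0)}})=0$. The L\^e--Teissier theory of the relative polar curve then yields that $\Sigma f$ is smooth at $\0$ and that the isolated Milnor number $\mu_{p(t)}(f_t)$ is independent of $t$; write $\mu^\circ$ for this common value. Transversality of $V(z_0)$ to $\Sigma f$ is then automatic, since smoothness gives $\operatorname{mult}_\0\Sigma f=1$ and the genericity hypothesis forces $(C\cdot V(z_0))_\0=1$. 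Converting $\Gamma^1_{f,\mbf z}=\emptyset$ into smoothness of $\Sigma f$ together with constancy of the transverse Milnor number is the main obstacle, and is exactly where the genericity assumptions on $z_0$ are used.

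Finally I would compute the cohomology. By the relative handle decomposition of the Milnor fiber for prepolar coordinates \cite{lecycles}, $F_{f,\0}$ is obtained up to homotopy from $F_{f_0,\0}$ by attaching $\lambda^0$ cells of dimension $n$; as $\lambda^0=0$, this is a homotopy equivalence $F_{f,\0}\simeq F_{f_0,\0}$. Now $f_0$ has an isolated singularity at $\0$ as a function of $(z_1,\dots,z_n)$, so $F_{f_0,\0}$ is homotopy equivalent to a bouquet of $\mu^\circ$ spheres $S^{n-1}$, where $\mu^\circ=\mu_\0(f_0)$ coincides with the constant family Milnor number found above. Hence $\widetilde H^{n-1}(F_{f,\0};\Z)\cong\Z^{\mu^\circ}$ is free and $\widetilde H^{n}(F_{f,\0};\Z)=0$ outright, with no surviving torsion. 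As a consistency check, the L\^e--Massey upper bound \cite{lemassey} gives the rank inequality $b_n(F_{f,\0})\le\lambda^0=0$, matching the vanishing of $\widetilde H^n$; the homotopy equivalence is what upgrades this rank statement to the integral one.
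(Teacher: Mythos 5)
Your conclusions are all correct, and the first half of your argument is essentially the paper's own: both of you convert $\lambda^0=0$ into the statement that the germ of $\Gamma^1_{f,\mbf z}$ at $\0$ is empty, and then appeal to a known equivalence for the geometric assertions (smoothness of $\Sigma f$, transversality of $V(z_0)$, constancy of the transverse Milnor number); the paper cites Theorem 5.1 of \cite{lemassey} for exactly this, while you wave at ``L\^e--Teissier theory,'' so neither proof is self-contained at that point, and that is fine. Where you genuinely diverge is the cohomology computation. The paper simply cites the known implication (it is the first bullet of \thmref{thm:bound}), whereas you derive it topologically: L\^e's attaching theorem plus Milnor's bouquet theorem for the isolated singularity $f_0$. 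Your route is more self-contained and even recovers the homotopy-level statement $F_{f,\0}\simeq F_{f_0,\0}$, which the paper never asserts.

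However, you misquote the attaching theorem, and the misquoted principle is false in general: the number of $n$-cells attached to $F_{f_0,\0}$ to produce $F_{f,\0}$ is not $\lambda^0$ but $\big(\Gamma^1_{f,\mbf z}\cdot V(f)\big)_\0=\lambda^0+\big(\Gamma^1_{f,\mbf z}\cdot V(z_0)\big)_\0$. Already for an isolated singularity this count is $\mu+\mu'$ (the Milnor numbers of $f$ and of its slice), not $\lambda^0=\mu$; the count ``$\lambda^0$ handles of index $n$'' belongs to the handle decomposition of $F_{f,\0}$ built from a ball by first attaching $\lambda^1$ handles of index $n-1$ (\cite{lecycles}), not to a decomposition relative to $F_{f_0,\0}$. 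Your argument survives only because you had already shown $\Gamma^1_{f,\mbf z}=\emptyset$ as a germ at $\0$, which makes both intersection numbers vanish, so zero cells are attached; that, and not ``$\lambda^0$ counts the cells,'' should be the stated justification. Incidentally, the paper's own machinery gives an even quicker finish than your cell-attaching argument: with $\lambda^0=0$, \thmref{thm:lemod} yields $\partial\colon\Z^{\lambda^1}\to\Z^{0}$, so $\widetilde H^{n}(F_{f,\0};\Z)\cong\coker\,\partial=0$ and $\widetilde H^{n-1}(F_{f,\0};\Z)\cong\ker\partial\cong\Z^{\lambda^1}$, and $\lambda^1=\mu^\circ$ once $\Sigma f$ is known to be smooth and transverse to $V(z_0)$.
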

 
 \medskip

 \begin{prop}\label{prop:1=1} Suppose $\lambda^1=1$ and $\lambda^0\geq 1$.
 
 \smallskip
 
 Then,  
 $$\widetilde H^{n-1}(F_{f, \mathbf 0};\ \Z)=0\hskip 0.2in\textnormal{ and }\hskip 0.2in \widetilde H^{n}(F_{f, \mathbf 0};\ \Z)\cong \Z^{\lambda^0-1},$$
 and $\lambda^0\geq 2$.
 
 \end{prop}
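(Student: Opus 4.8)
The plan is to reduce everything to computing the two surviving Betti numbers $b_{n-1}:=\dim_\Q\widetilde H^{n-1}(\mfo;\Q)$ and $b_n:=\dim_\Q\widetilde H^n(\mfo;\Q)$, using three ingredients: the geometry forced by $\lambda^1=1$, the reduced Euler characteristic expressed through the L\^e numbers, and the internal monodromy of the vanishing cycles along $\Sigma f$. First I would unwind the hypothesis $\lambda^1=1$. Since $\lambda^1=\sum_C\mu^\circ_C\,(C\cdot V(z_0))_\0$ is a sum of positive integers over the components $C$ of $\Sigma f$ at $\0$, the value $1$ forces $r=1$, a single component $C$ that is smooth (so $m=1$ and $(C\cdot V(z_0))_\0=1$), carrying transverse Milnor number $\mu^\circ=1$; thus the transverse singularity is an ordinary node $A_1$. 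Next, the L\^e-number formula for the reduced Euler characteristic gives
\[
\widetilde\chi(\mfo)=(-1)^n\lambda^0+(-1)^{n-1}\lambda^1=(-1)^n(\lambda^0-1),
\]
while, since only degrees $n-1$ and $n$ contribute, $\widetilde\chi(\mfo)=(-1)^{n-1}b_{n-1}+(-1)^nb_n$. Comparing yields the single relation $b_n-b_{n-1}=\lambda^0-1$, so it suffices to prove $b_{n-1}=0$ (whence $b_n=\lambda^0-1$) and, separately, that $\lambda^0\ge 2$.

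The heart of the argument is the vanishing $b_{n-1}=0$. Here I would work with the perverse sheaf of vanishing cycles of $f$, which is supported on the smooth curve $C=\Sigma f$ and, away from $\0$, is the shift of a rank-$\mu^\circ=1$ local system $\call$ on $C\setminus\{\0\}$; its monodromy is the internal (vertical) monodromy $h^\circ$ of the node as it travels around $C$, acting on the $\widetilde H^{n-1}\cong\Z$ of the transverse Milnor fiber. The L\^e-module description of the hypercohomology on the disk identifies $\widetilde H^{n-1}(\mfo)$ with the $h^\circ$-invariants $\ker(\mathrm{id}-h^\circ)$ of this rank-one lattice, and our upper bound on the Betti numbers, proved with L\^e, independently gives $b_{n-1}\le\lambda^1=1$. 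Thus $b_{n-1}\in\{0,1\}$, with $b_{n-1}=1$ precisely when $h^\circ=\mathrm{id}$, i.e. when $\call$ is constant and the family of nodes along $C$ has trivial internal monodromy. The main obstacle is to rule this constant case out under $\lambda^0\ge 1$: I would argue that trivial internal monodromy places us in exactly the product situation of \propref{prop:0=0}, where the relative polar curve is empty and hence $\lambda^0=0$, contradicting $\lambda^0\ge 1$. Making this implication ``trivial internal monodromy $\Rightarrow \lambda^0=0$'' precise — reading the vanishing of $\lambda^0$ off the internal monodromy through the L\^e cycles — is the step I expect to require the most care. Granting it, $h^\circ\ne\mathrm{id}$, the invariants vanish, and $b_{n-1}=0$, so that $b_n=\lambda^0-1$.

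Finally, to obtain $\lambda^0\ge 2$ it is enough to show $b_n\ge 1$, and here I would invoke A'Campo's theorem: since $\0\in\Sigma f$ is a singular point of $f$, the Lefschetz number of the Milnor monodromy $h$ vanishes, so the reduced Lefschetz number equals $-1$. As $b_{n-1}=0$, the only surviving reduced cohomology sits in degree $n$, whence
\[
-1=(-1)^n\operatorname{tr}\!\big(h\mid\widetilde H^n(\mfo;\Q)\big),
\]
which forces $\widetilde H^n(\mfo)\ne 0$. Therefore $b_n=\lambda^0-1\ge 1$, i.e. $\lambda^0\ge 2$, completing the plan.
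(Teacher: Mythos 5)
Your geometric unwinding of $\lambda^1=1$, the Euler--characteristic relation $b_n-b_{n-1}=\lambda^0-1$, and the final A'Campo step are all sound; indeed the last of these is essentially how the paper gets $\lambda^0\geq 2$ (via \propref{prop:equal2}, whose proof is exactly your Lefschetz-number argument). But the heart of your proof --- the vanishing $b_{n-1}=0$ --- rests on two claims that fail. First, \propref{prop:inj} gives only an \emph{injection} $\widetilde H^{n-1}(F_{f,\0};\Z)\hookrightarrow\ker\{\operatorname{id}-h_C\}$, not the identification you assert, so even with $\mu^\circ_C=1$ you cannot conclude that $b_{n-1}=1$ ``precisely when'' the vertical monodromy is trivial. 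Second, and fatally, the implication you yourself flag as the delicate step --- ``trivial internal monodromy $\Rightarrow\lambda^0=0$'', equivalently ``$\lambda^0\neq 0\Rightarrow h_C\neq\operatorname{id}$'' --- is false. The paper says so explicitly in its concluding remarks: this was the author's own original line of attack, and $f=z_2^2-z_1^3-z_0^2z_1^2$ is a counterexample. That example sits exactly in your regime: $\Sigma f$ is the smooth $z_0$-axis with transverse singularity $A_1$, so $\lambda^1=1$; the vertical monodromy is the identity; yet $\lambda^0\neq 0$ (in fact $\lambda^0=5$), and nonetheless $b_{n-1}=0$. So trivial vertical monodromy does not force the product situation of \propref{prop:0=0}, and your argument for $b_{n-1}=0$ collapses.

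The repair is immediate, and is one of the paper's two routes: you misquoted the L\^e--Massey bound as the non-strict $b_{n-1}\leq\lambda^1$. \thmref{thm:bound} actually gives, once $\lambda^0\neq 0$, the \emph{strict} inequality $\tilde b_{n-1}+\tau_\mfp<\lambda^1=1$ for every prime $\mfp$, which instantly forces $\tilde b_{n-1}=0$ and $\tau_\mfp=0$ --- no monodromy argument is needed, and you also obtain the torsion-freeness of $\widetilde H^{n}(F_{f,\0};\Z)$, which your rational Euler-characteristic computation cannot deliver on its own (it only pins down the rank, whereas the proposition asserts $\widetilde H^{n}(F_{f,\0};\Z)\cong\Z^{\lambda^0-1}$ over $\Z$). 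The paper's primary proof instead quotes Siersma's isolated line singularity theorem: under $\lambda^1=1$ and $\lambda^0\neq 0$, the Milnor fiber has the homotopy type of a bouquet of $\lambda^0-1$ spheres of dimension $n$, which gives the full integral statement at once.
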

 
 \medskip
 
 \begin{prop}\label{prop:0=1} Suppose $\lambda^0=1$ and $\lambda^1\geq 2$.
 
 \smallskip
 
Then $\mult_\0 f=2$, $m=2$, and
$$\widetilde H^{n-1}(F_{f, \mathbf 0};\ \Z)\cong\Z^{\lambda^1-1}\hskip 0.2in\textnormal{ and }\hskip 0.2in\widetilde H^{n}(F_{f, \mathbf 0};\ \Z)=0.$$ 

\smallskip

Furthermore,  if $r=1$, then $\lambda^1=2$, and so $\widetilde H^{n-1}(F_{f, \mathbf 0};\ \Z)\cong\Z$.
\end{prop}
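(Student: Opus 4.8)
The plan is to handle the cohomological assertion and the two statements about $\Sigma f$ by separate means. For the cohomology I would first record that, since $\dim_\0\Sigma f=1$, the groups $\widetilde H^i(F_{f,\0};\Z)$ vanish outside $i=n-1,n$, and that the L\^e-number Euler characteristic formula gives $\widetilde\chi(F_{f,\0})=(-1)^n\lambda^0+(-1)^{n-1}\lambda^1$, so that
$$\rank\widetilde H^{n-1}(F_{f,\0};\Z)-\rank\widetilde H^{n}(F_{f,\0};\Z)=\lambda^1-\lambda^0=\lambda^1-1.$$
Feeding $\lambda^0=1$ into the upper bound on the Betti numbers we proved with L\^e — the same inequality $\rank\widetilde H^n\le\lambda^0-1$ that drives \propref{prop:1=1} — forces $\rank\widetilde H^n=0$ and hence $\rank\widetilde H^{n-1}=\lambda^1-1$. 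To pin down the integral groups I would pass to the L\^e-module presentation of $\widetilde H^*(F_{f,\0};\Z)$: because $\lambda^0=1$ this is the two-term cochain complex $\Z^{\lambda^1}\overset{\delta}{\longrightarrow}\Z$ in degrees $n-1$ and $n$. The rank computation shows $\delta\neq0$, so $\widetilde H^{n-1}=\ker\delta$ is a subgroup of a free group, free of rank $\lambda^1-1$, giving $\widetilde H^{n-1}(F_{f,\0};\Z)\cong\Z^{\lambda^1-1}$ at once; the integral vanishing $\widetilde H^n=\coker\delta=0$ is then equivalent to the surjectivity of $\delta$, which I expect to read off from the variation map built into the L\^e module.

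For the multiplicity I would argue through the relative polar curve $\Gamma^1_{f,z_0}$, using the identity $\left(\Gamma^1_{f,z_0}\cdot V(f)\right)_\0=\lambda^0+\mult_\0\Gamma^1_{f,z_0}$ together with $\lambda^0=\left(\Gamma^1_{f,z_0}\cdot V(\partial f/\partial z_0)\right)_\0\ge\mult_\0\Gamma^1_{f,z_0}$, the latter because $\Gamma^1_{f,z_0}\not\subseteq\Sigma f$ and $\0\in V(\partial f/\partial z_0)$. Since $\lambda^0=1$, the polar curve is smooth and $\left(\Gamma^1_{f,z_0}\cdot V(f)\right)_\0=2$; as a smooth curve through $\0$ meets $V(f)$ with multiplicity at least $\mult_\0 f$, this yields $\mult_\0 f\le 2$, and $\mult_\0 f=2$ because $f$ is singular along $\Sigma f$.

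The delicate part, which I expect to be the main obstacle, is the pair $m=2$ and the implication $r=1\Rightarrow\lambda^1=2$ (and, in parallel, the surjectivity of $\delta$ above). Here I would use that $\mult_\0 f=2$ forces the tangent cone of $V(f)$ to be a quadric, of corank exactly $2$: corank $\le1$ would put us in the $\mu^\circ$-constant situation of \propref{prop:0=0} with $\lambda^0=0$, contrary to hypothesis. The splitting lemma then writes $f$, after an analytic change of coordinates, as $z_1^2+\cdots+z_{n-1}^2+\tilde f(z_0,z_n)$ with $\Sigma f\cong\Sigma\tilde f$ and, by Thom--Sebastiani, with the same L\^e numbers, the same $m$ and $r$, and the same transverse Milnor numbers. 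Extracting $m$ and $\mu^\circ$ from this planar germ $\tilde f$ under the single constraint $\lambda^0=1$ is exactly the step that requires the full strength of the L\^e-module description of the internal vanishing-cycle local systems: one must control the multiplicity and the number of branches of the one-dimensional part of $\Sigma\tilde f$, and in the irreducible case conclude that the unique branch $C$ has $\mult_\0 C=m=2$ with transverse Milnor number $\mu^\circ=1$, so that $\lambda^1=\mu^\circ\cdot\mult_\0 C=2$ and $\widetilde H^{n-1}(F_{f,\0};\Z)\cong\Z$.
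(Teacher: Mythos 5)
Your proposal establishes the ranks correctly, but each of the remaining assertions has a real gap. First, the integral vanishing $\widetilde H^{n}(F_{f, \mathbf 0};\ \Z)=0$: a nonzero map $\delta:\Z^{\lambda^1}\rightarrow\Z$ need not be surjective (its cokernel can be any finite cyclic group), and you defer exactly this point to ``the variation map built into the L\^e module'' without an argument. The gap closes immediately if you use \thmref{thm:bound} at full strength: it bounds $\tilde b_{n}+\tau_{\mathfrak{p}}<\lambda^0=1$ for \emph{every} prime $\mathfrak{p}$, not merely the Betti number, so all $\tau_{\mathfrak{p}}=0$ and there is no torsion. Second, your route to $\mult_\0 f=2$ misquotes the attaching identity: correctly it reads $\left(\Gamma^1_{f,\mbf z}\cdot V(f)\right)_\0=\left(\Gamma^1_{f,\mbf z}\cdot V(\partial f/\partial z_0)\right)_\0+\left(\Gamma^1_{f,\mbf z}\cdot V(z_0)\right)_\0$, and nothing in the standing hypotheses forces $\left(\Gamma^1_{f,\mbf z}\cdot V(z_0)\right)_\0=\mult_\0\Gamma^1_{f,\mbf z}$; the genericity of $z_0$ is imposed relative to $\Sigma f$, not relative to the polar curve, which may well be tangent to $V(z_0)$. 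The paper avoids this entirely: the Fulton inequality you already invoke, $1=\lambda^0\geq\big(\mult_\0 \Gamma^1_{f, \mbf z}\big)\big(\mult_\0 (\partial f/\partial z_0)\big)$, gives not only $\mult_\0\Gamma^1_{f,\mbf z}=1$ but also $\mult_\0(\partial f/\partial z_0)=1$, whence $\mult_\0 f\leq 2$, and equality holds since $\0\in\Sigma f$.

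The claims $m=2$ and $r=1\Rightarrow\lambda^1=2$ are simply not proved in your proposal: the splitting-lemma/Thom--Sebastiani reduction ends with the statement that one ``must control the multiplicity and the number of branches,'' which is the assertion to be established, not a proof (and the reduction itself is delicate, since the two-variable germ $\tilde f$ need not be reduced even though $f$ is). The paper's arguments are short and use exactly the machinery of Section 2. For $m=2$: since $\lambda^0=1$, the L\^e--Milnor monodromy $\alpha_0$ of \thmref{thm:lemod} is an automorphism of $\Z$, so $\operatorname{tr}\alpha_0=\pm1$; the trace formula $\operatorname{tr}\alpha_0=(-1)^n(m-1)$ then forces $m-1=1$ (as $m\geq 1$), i.e., $m=2$. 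For $r=1$: then $\Sigma f$ is a single component $C$ with $\left(C\cdot V(z_0)\right)_\0=\mult_\0 C=m=2$, so $\lambda^1=2\mu^\circ_C$; since $\tilde b_{n-1}=\lambda^1-1=2\mu^\circ_C-1$ and \propref{prop:inj} gives $\tilde b_{n-1}\leq\mu^\circ_C$, we conclude $\mu^\circ_C=1$, hence $\lambda^1=2$ and $\widetilde H^{n-1}(F_{f, \mathbf 0};\ \Z)\cong\Z$. No tangent-cone or splitting-lemma analysis is needed, and it is precisely the trace formula and the injection into the vertical-monodromy invariants that supply the leverage your sketch was missing.
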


\medskip

\begin{prop}\label{prop:1=2} Suppose $\lambda^1=2$ and $\lambda^0\geq 2$.
 
 \smallskip

Then, in fact, $\lambda^0\geq 3$ and there are two cases:

\smallskip

\noindent Case 1: 
$$\widetilde H^{n-1}(F_{f, \mathbf 0};\ \Z)=0\hskip 0.2in\textnormal{ and }\hskip 0.2in \widetilde H^{n}(F_{f, \mathbf 0};\ \Z)\cong \Z^{\lambda^0-2}\oplus \big(\Z/b\Z\big)$$  for some $b\geq 1$.

\bigskip

\noindent Case 2:
$$\widetilde H^{n-1}(F_{f, \mathbf 0};\ \Z)\cong \Z\hskip 0.2in\textnormal{ and }\hskip 0.2in\widetilde H^{n}(F_{f, \mathbf 0};\ \Z)\cong \Z^{\lambda^0-1}.$$

\medskip

Furthermore, if we also assume that $m=1$, then we are in Case (1) above and
$$\widetilde H^{n-1}(F_{f, \mathbf 0};\ \Z)=0\hskip 0.2in\textnormal{ and }\hskip 0.2in \widetilde H^{n}(F_{f, \mathbf 0};\ \Z)\cong \Z^{\lambda^0-2}\oplus \big(\Z/b\Z\big),$$ where either $b=1$ (so there is no torsion) or $b$ is a product of prime powers where the primes are either $3$ or congruent to $1$ modulo $6$. 
\end{prop}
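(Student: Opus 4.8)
The plan is to reduce the entire statement to the integral attaching map of the top cells in Massey's handle decomposition, and only then bring in the monodromy to get the finer structure. Since $\mbf z$ is prepolar, up to homotopy $F_{f,\0}$ is obtained from $F_{f|_{V(z_0)},\0}$ by attaching $\lambda^0$ cells of real dimension $n$, while $f|_{V(z_0)}$ has an isolated singularity whose Milnor fiber is a bouquet of $\lambda^1$ spheres $S^{n-1}$ (a Lê-number/Euler-characteristic computation shows the Milnor number of the generic hyperplane slice is exactly $\lambda^1$). With $\lambda^1=2$ this produces a reduced cellular chain complex concentrated in degrees $n$ and $n-1$, namely $0\to\Z^{\lambda^0}\xrightarrow{\partial}\Z^2\to0$. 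Setting $\rho:=\rank\partial\in\{0,1,2\}$, I read off $\widetilde H_n(F_{f,\0};\Z)=\ker\partial\cong\Z^{\lambda^0-\rho}$ (free) and $\widetilde H_{n-1}(F_{f,\0};\Z)=\coker\partial$, of free rank $2-\rho$ with torsion $T:=\operatorname{Tors}(\coker\partial)$; the Universal Coefficient Theorem moves $T$ up one degree, giving $\widetilde H^{n-1}\cong\Z^{2-\rho}$ and $\widetilde H^{n}\cong\Z^{\lambda^0-\rho}\oplus T$.

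Next I would apply the Lê--Massey upper bound on the Betti numbers, which forces $b_{n-1}\le\lambda^1-1$ once $\lambda^0\ge1$, i.e. $\rho\ge1$ (the attaching map cannot be null). This is exactly what produces the dichotomy: $\rho=2$ is Case 1, with $\widetilde H^{n-1}=0$ and $\widetilde H^{n}\cong\Z^{\lambda^0-2}\oplus T$, $T$ finite; and $\rho=1$ is Case 2. In Case 2 the rank-one image of $\partial$ contains a vanishing cycle arising from the generic polar curve, which I would show is a primitive class (as in the rank-one attachment of \propref{prop:0=1}); primitivity makes $\coker\partial$ torsion-free, so $\widetilde H^{n-1}\cong\Z$ and $\widetilde H^{n}\cong\Z^{\lambda^0-1}$ as stated. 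That same primitive column shows the first elementary divisor of $\partial$ equals $1$, so in Case 1 the group $T$ is cyclic, which is what licenses writing $T\cong\Z/b\Z$.

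To upgrade $\lambda^0\ge2$ to $\lambda^0\ge3$ I would rule out $\lambda^0=2$ using A'Campo's theorem that the Lefschetz number of the monodromy $h$ vanishes at the singular point $\0$: $1+(-1)^{n-1}\operatorname{tr}(h\mid\widetilde H^{n-1})+(-1)^{n}\operatorname{tr}(h\mid\widetilde H^{n})=0$. If $\lambda^0=2,\ \rho=2$, the reduced cohomology is pure torsion, both traces vanish rationally, and A'Campo gives $1=0$, a contradiction. If $\lambda^0=2,\ \rho=1$, then $\widetilde H^{n-1}\otimes\C$ and $\widetilde H^{n}\otimes\C$ are one-dimensional with $h$-eigenvalues $\alpha,\beta$ that are roots of unity with $\alpha-\beta=\pm1$, where $\alpha$ lies in the transverse monodromy spectrum. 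As $\lambda^1=2$ forces $m\in\{1,2\}$, for $m=2$ the transverse type is a node, $\alpha=-1$, and no root of unity $\beta$ solves $-1-\beta=\pm1$; this eliminates $\rho=1$ when $m=2$. For $m=1$ I eliminate $\rho=1$ directly (below), so $\lambda^0=2$ is impossible in all cases. I expect this step -- disentangling the rank-one subcase from the monodromy spectrum -- to be the main obstacle, since it is the one place where the homological bookkeeping does not suffice and the internal vanishing-cycle local system must be used.

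Finally, for the refinement under $m=1$: here $\Sigma f$ is a single smooth branch with $\mult_\0 C=1$ and $\mu^\circ=\lambda^1=2$, so the generic transverse type is $A_2$, whose monodromy $h'$ has characteristic polynomial $\Phi_6(t)=t^2-t+1$ and hence no eigenvalue $1$. Because $\mult_\0 C=1$, the internal monodromy of the vanishing cycles along $C$ is conjugate to $h'$, so its invariants vanish and $b_{n-1}=\dm_\Q\ker(h'-\mathrm{id})=0$; this places us in Case 1 and, with A'Campo as above, yields $\lambda^0\ge3$. The transverse lattice $\widetilde H_{n-1}(F_{f|_{V(z_0)},\0})\cong\Z^2$ is a rank-one module over $\Z[t]/\Phi_6(t)=\Z[\zeta_6]$, the Eisenstein integers, and since $h'$ preserves $\im\partial$ the cyclic torsion $T\cong\Z/b\Z$ inherits a $\Z[\zeta_6]$-module structure. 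For any prime $p$ inert in $\Z[\zeta_6]$ (that is, $p\equiv2\bmod3$, including $p=2$), the reduction $\mathbb{F}_p[\zeta_6]$ is the field $\mathbb{F}_{p^2}$, so $T\otimes\mathbb{F}_p$ is an $\mathbb{F}_{p^2}$-vector space and thus of even $\mathbb{F}_p$-dimension; cyclicity forces this dimension to be $0$. Hence no inert prime divides $b$, leaving exactly $b=1$ or primes equal to $3$ or congruent to $1\bmod6$, as claimed.
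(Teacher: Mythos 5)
Your opening reduction is built on two miscounts, and they matter. You assert that $F_{f|_{V(z_0)},\0}$ is a bouquet of $\lambda^1$ spheres and that $F_{f,\0}$ is obtained from it by attaching $\lambda^0$ $n$-cells. Neither is correct once the polar curve is nonempty, which is forced here by $\lambda^0\geq 2$: from the cycle decomposition $V\bigl(\frac{\partial f}{\partial z_1},\dots,\frac{\partial f}{\partial z_n}\bigr)=\Gamma^1_{f,\mbf z}+\Lambda^1_{f,\mbf z}$ in Section 2, the slice Milnor number is $\mu\bigl(f|_{V(z_0)}\bigr)=\bigl(\Gamma^1_{f,\mbf z}\cdot V(z_0)\bigr)_\0+\lambda^1>\lambda^1$, and L\^e's attaching theorem attaches $\bigl(\Gamma^1_{f,\mbf z}\cdot V(f)\bigr)_\0=\bigl(\Gamma^1_{f,\mbf z}\cdot V(z_0)\bigr)_\0+\lambda^0>\lambda^0$ cells. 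The two excesses cancel in the Euler characteristic, which is why your bookkeeping looks consistent, but the cellular complex you write down, $0\to\Z^{\lambda^0}\to\Z^{2}\to 0$, is not the one the handle decomposition produces. A two-term complex with ranks exactly $\lambda^1$ and $\lambda^0$ computing $\widetilde H^{n-1}$ and $\widetilde H^{n}$ does exist, but that is precisely \thmref{thm:lemod} (the L\^e modules, i.e.\ $\psi_{z_0}[-1]$ and $\phi_{z_0}[-1]$ of the vanishing-cycle perverse sheaf, with perversity and isolated support doing the cancellation); it cannot be read off the naive attachment. Separately, your Case 2 torsion-freeness and Case 1 cyclicity of $T$ rest on an unproven ``primitive class coming from the polar curve'' (there is no such argument in \propref{prop:0=1} to lean on). Both facts, and $\lambda^0\geq 3$, come for free from the tools you already cite but use only rationally: the bound of \thmref{thm:bound} holds for every prime $\mfp$ in the form $\tilde b_{n-1}+\tau_\mfp<\lambda^1=2$, which instantly gives $\tau_\mfp\leq 1$ in Case 1 and $\tau_\mfp=0$ in Case 2, and \propref{prop:equal2} (A'Campo plus the integrality forcing traces on rank-one integral cohomology to be $\pm1$) rules out $\lambda^0=2$ with none of your detour through root-of-unity equations and transverse spectra.

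The $m=1$ refinement contains the most serious error: you claim that smoothness of $C$ makes the internal (vertical) monodromy of the local system along $C-\{\0\}$ conjugate to the transverse Milnor monodromy $h'$ of the $A_2$ point, and you then feed that into \propref{prop:inj} to get $\tilde b_{n-1}=0$. These are different monodromies --- one goes around the branch $C$, the other around the values of $f$ --- and they are not conjugate in general: for a trivial family such as $z_1^3+z_2^2$ viewed in $\C^3$ the vertical monodromy is the identity while $h'$ is the $A_2$ monodromy, and the first remark in Section 5 of the paper (the example $f=z_2^2-z_1^3-z_0^2z_1^2$) is exactly a warning that the vertical monodromy can be trivial when one would like it not to be. The correct operator is the L\^e--Milnor monodromy $\alpha_1$ on $\Z^{\lambda^1}$ from \thmref{thm:lemod}: its trace is $(-1)^n m=\pm 1$, its characteristic polynomial has degree $2$ and is a product of cyclotomic polynomials, hence equals $t^2\pm t+1$, irreducible over $\Q$; since $\partial\circ\alpha_1=\alpha_0\circ\partial$, $\alpha_1$ preserves $\ker\partial\cong\widetilde H^{n-1}(F_{f,\0};\Z)$, so Case 2 would produce a rank-one invariant sublattice and a linear factor --- impossible. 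The same polynomial is irreducible over $\Z/\mfp\Z$ unless $\mfp=3$ or $\mfp\equiv 1 \pmod 6$, which bounds the torsion. Your Eisenstein-integer/inert-prime computation is the same arithmetic as this last step and is fine in spirit, but you hang it on the wrong lattice (the slice Milnor lattice, whose rank is not $2$) and the wrong operator ($h'$, or the vertical monodromy, rather than $\alpha_1$). Finally, a small sign point: the $A_1$ and $A_2$ transverse eigenvalues carry a factor $(-1)^{n-1}$, so $\alpha=-1$ and $\operatorname{char}=t^2-t+1$ hold only for one parity of $n$; harmless for your contradictions, but it should be stated.
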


\medskip

\begin{prop}\label{prop:0=2} Suppose $\lambda^0=2$ and $\lambda^1\geq 3$.
 
 \smallskip

There are two cases:

\smallskip

\noindent (1) 
$$\widetilde H^{n-1}(F_{f, \mathbf 0};\ \Z)\cong\Z^{\lambda^1-2}\hskip 0.2in\textnormal{ and }\hskip 0.2in \widetilde H^{n}(F_{f, \mathbf 0};\ \Z)\cong \Z/b\Z$$  for some $b\geq 1$.

\bigskip

\noindent (2) 
$$\widetilde H^{n-1}(F_{f, \mathbf 0};\ \Z)\cong \Z^{\lambda^1-1}\hskip 0.2in\textnormal{ and }\hskip 0.2in\widetilde H^{n}(F_{f, \mathbf 0};\ \Z)\cong \Z.$$

\bigskip

Furthermore, if $m=2$, then we are in Case (1) above and either $b=1$ or $b$ is a product of prime powers where the primes are either $3$ or congruent to $1$ modulo $6$. 
\end{prop}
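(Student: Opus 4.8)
The plan is to reduce the statement to integral linear algebra via the handle description of the Milnor fiber, and then to read off the prime restriction from an order-three symmetry of the internal monodromy. Since $\mbf z$ is prepolar, $f_{|_{V(z_0)}}$ has an isolated singularity with $\mu(f_{|_{V(z_0)}})=\lambda^1$, and $F_{f,\mathbf 0}$ is obtained, up to homotopy, from the Milnor fiber of $f_{|_{V(z_0)}}$ (a bouquet of $\lambda^1$ copies of $S^{n-1}$) by attaching $\lambda^0=2$ cells of dimension $n$. Writing $\partial\colon\Z^2\to\Z^{\lambda^1}$ for the induced map on middle homology and $\rho:=\rank\partial$, I get $H_n(F_{f,\mathbf 0})=\ker\partial$, free of rank $2-\rho$, and $H_{n-1}(F_{f,\mathbf 0})=\coker\partial$. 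By universal coefficients $\widetilde H^{n-1}(F_{f,\mathbf 0};\Z)$ is free of rank $\lambda^1-\rho$, while $\widetilde H^n(F_{f,\mathbf 0};\Z)\cong\Z^{\,2-\rho}\oplus\operatorname{Tors}(\coker\partial)$, so all torsion lives in degree $n$.

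First I would extract the dichotomy. The Betti-number bound proved with L\^e, applied over $\Q$, gives $2-\rho\le\lambda^0-1=1$, so $\rho\in\{1,2\}$, and the two values yield exactly the free ranks of the two displayed cases. Applied instead over each $\mathbb F_p$, the same bound gives $\dim_{\mathbb F_p}\widetilde H^n(F_{f,\mathbf 0};\mathbb F_p)\le 1$, i.e.\ $\partial\otimes\mathbb F_p\neq 0$ for every prime $p$; equivalently the first elementary divisor of $\partial$ is $1$. Hence $\operatorname{Tors}(\coker\partial)$ is cyclic when $\rho=2$, giving Case (1) with $\widetilde H^n\cong\Z/b\Z$, and $\coker\partial$ is torsion-free when $\rho=1$, giving Case (2) with $\widetilde H^n\cong\Z$.

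For the final claim I would analyze the hypothesis $m=2$. The geometric heart is to show that when $m=2$ the two vanishing cycles $\partial(e_1),\partial(e_2)$ span a rank-two sublattice of the middle homology of the slice Milnor fiber on which the Seifert/variation form is of Eisenstein ($A_2$-cuspidal) type, with the internal monodromy acting as an isometry of order $6$ — characteristic polynomial $\Phi_6(x)=x^2-x+1$, no eigenvalue $-1$. This is the transverse behaviour forced by a degree-two slice, and I expect to obtain it by combining the internal local system of the vanishing cycles with the numerical relations among $\lambda^0,\lambda^1,m,r$ recorded earlier. Nondegeneracy of this form forces $\rho=2$, so $m=2$ puts us in Case (1); and it identifies $\operatorname{Tors}(\coker\partial)$ with the discriminant group of the saturation of that sublattice, a cyclic module over the Eisenstein integers $\Z[\zeta_3]=\Z[\zeta_6]$.

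Finally I would conclude by elementary arithmetic in $\Z[\zeta_3]$. A finite cyclic $\Z[\zeta_3]$-module $\Z/b\Z$ has, at each prime $p\mid b$, a $p$-primary part that is again a cyclic $\Z[\zeta_3]$-module; since the residue ring of $\Z[\zeta_3]$ at an inert prime is $\mathbb F_{p^2}$, no inert prime can occur, so every $p\mid b$ splits or ramifies in $\Z[\zeta_3]$, that is $p=3$ or $p\equiv 1\pmod 6$ (with $b=1$ the torsion-free possibility). I expect the main obstacle to be precisely the geometric input of the third paragraph: proving that $m=2$ makes the relevant rank-two variation form Eisenstein of order $6$, which is what simultaneously forces Case (1) and supplies the order-three symmetry behind the prime restriction; the reduction of the first two paragraphs and the closing number theory are comparatively routine.
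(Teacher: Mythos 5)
Your dichotomy argument (your second paragraph) is sound, and it is essentially the paper's own: Theorem~\ref{thm:bound} applied over $\Q$ and over each $\Z/\mfp\Z$, read through the Smith normal form of $\partial$. But the way you set up the two-term complex is false as stated. For prepolar coordinates the slice Milnor fiber $F_{f_{|_{V(z_0)}},\0}$ is a bouquet of $\big(\Gamma^1_{f,\mbf z}\cdot V(z_0)\big)_\0+\lambda^1$ spheres, not $\lambda^1$, and the number of attached $n$-cells (L\^e's attaching theorem) is $\big(\Gamma^1_{f,\mbf z}\cdot V(f)\big)_\0=\big(\Gamma^1_{f,\mbf z}\cdot V(z_0)\big)_\0+\lambda^0$, not $\lambda^0$; since $\lambda^0=2$ forces $\Gamma^1_{f,\mbf z}\neq 0$, these corrections are strictly positive here. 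Whether the extra cells can be cancelled up to homotopy is not known; what is available is the algebraic cancellation, namely Theorem~\ref{thm:lemod}, which supplies $\partial\colon\Z^{\lambda^1}\to\Z^{\lambda^0}$ with $\ker\partial\cong\widetilde H^{n-1}(F_{f,\0};\Z)$ and $\coker\,\partial\cong\widetilde H^{n}(F_{f,\0};\Z)$ (your map is its dual). Citing that instead repairs your first paragraph, and cases (1) and (2) then follow exactly as in the paper.

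The genuine gap is the ``Furthermore'' clause, i.e.\ precisely the part you flag as the main obstacle. You never prove that $m=2$ forces your rank-two variation form to be ``Eisenstein of order $6$,'' and the object you propose to extract this from --- the internal (vertical) monodromy $h_C$ of the local system on $\Sigma f-\{\0\}$ --- is not the operator that carries the relevant symmetry. The paper gets this step from the trace formula in Theorem~\ref{thm:lemod}: the L\^e--Milnor monodromy $\alpha_0$ on $\Z^{\lambda^0}=\Z^2$, induced by the Milnor monodromy of $f$ (not by $h_C$), satisfies $\operatorname{tr}\alpha_0=(-1)^n(m-1)=\pm 1$ when $m=2$, and its characteristic polynomial is a degree-two product of cyclotomic polynomials, hence must be $t^2\pm t+1$. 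Irreducibility over $\Q$ rules out Case (2): since $\partial\circ\alpha_1=\alpha_0\circ\partial$, the subspace $\operatorname{im}\partial\otimes\Q$ is $\alpha_0$-invariant, so in Case (2) the induced automorphism of the one-dimensional space $\coker\,\partial\otimes\Q$ would contribute a linear factor of $t^2\pm t+1$. Likewise, for a prime $\mfp$ dividing $b$, $\alpha_0$ induces an automorphism of the one-dimensional cokernel of $\partial^{\mfp}$ over $\Z/\mfp\Z$, so $t^2\pm t+1$ must have a root modulo $\mfp$, forcing $\mfp=3$ or $\mfp\equiv 1\pmod 6$ --- at which point your closing arithmetic (no inert primes in $\Z[\zeta_3]$) is the same computation. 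Without invoking this trace formula, or actually proving your Eisenstein claim, the ``Furthermore'' statement remains unestablished in your write-up.
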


\medskip

\medskip

\begin{prop}\label{prop:equal} Suppose $\lambda^0=\lambda^1= 3$.

\smallskip

Then
$$\widetilde H^{n-1}(F_{f, \mathbf 0};\ \Z)\cong \Z^2\hskip 0.2in\textnormal{ and }\hskip 0.2in\widetilde H^{n}(F_{f, \mathbf 0};\ \Z)\cong \Z^2.$$

\smallskip

Furthermore, if $r=1$, then $m=1$, i.e.,  $\Sigma f$ is smooth and transversely intersected by $V(z_0)$ at $\0$
\end{prop}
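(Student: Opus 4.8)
The plan is to reduce everything to the two-term complex furnished by the L\^e-module machinery and then to pin down a single rank. Our previous results on L\^e modules present $\widetilde H^{*}(\mfo;\Z)$ (in the only possibly-nonzero degrees $n-1$ and $n$) as the cohomology of a complex $\Z^{\lambda^0}\xrightarrow{\partial}\Z^{\lambda^1}$, so that $\widetilde H^{n-1}(\mfo;\Z)$ is the free part of $\coker\partial$ and $\widetilde H^{n}(\mfo;\Z)\cong\ker\partial\oplus(\coker\partial)_{\mathrm{tors}}$. Under the hypothesis $\lambda^0=\lambda^1=3$ the map $\partial$ is a square $3\times3$ integer matrix; writing $\rho:=\rank\partial$, the reduced Euler-characteristic identity $b_{n-1}-b_{n}=\lambda^1-\lambda^0=0$ is then automatic, and both Betti numbers equal $3-\rho$. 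Thus the first assertion is equivalent to the two facts that $\rho=1$ and that $\coker\partial$ is torsion-free.

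For the upper bound I would argue $\rho\ge1$: since $\lambda^0\ge1$ the singularity is not of product type (that is the content of \propref{prop:0=0}), so the top cells attach nontrivially and $\partial\ne0$; equivalently $b_n\le\lambda^0-1=2$. This alone gives $b_{n-1}=b_{n}\le2$. The hard part is the reverse inequality $\rho\le1$ together with the vanishing of the torsion, and this is where the internal local systems enter. Here I would use the description of $\partial$ (and of the Milnor monodromy it intertwines) in terms of the rank-$\mu_i^\circ$ local systems $\mathcal L_i$ on $C_i\setminus\{\0\}$ carrying the transverse monodromies $h_i$. The constraint $\lambda^0=\lambda^1=3$ is very rigid: it should force the eigenvalue-$1$ part of the assembled monodromy to have geometric multiplicity exactly $2$, so that $\rank(\text{monodromy}-\mathrm{id})=1$ and hence $\rho=1$; the same eigenvalue bookkeeping should show that no prime divides $(\coker\partial)_{\mathrm{tors}}$, giving $\widetilde H^n(\mfo;\Z)\cong\Z^2$ on the nose. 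I expect this to be the main obstacle, since in the neighbouring regimes (\propref{prop:1=2} and \propref{prop:0=2}) the analogous analysis genuinely splits into a rank-$1$ case and a higher-rank case carrying torsion, and one must show that the diagonal value $\lambda^0=\lambda^1=3$ excludes the degenerate alternative. A workable route is a semicontinuity / L\^e--Iomdine comparison: degenerate $f$ along $z_0$ so as to drop into the already-settled cases of \propref{prop:1=2} and \propref{prop:0=2}, and transport the conclusion back.

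Finally, the ``furthermore'' is the clean part. If $r=1$ then $\Sigma f=C$ and, by the genericity of $z_0$, $\lambda^1=\mu^\circ\cdot(C\cdot V(z_0))_\0=\mu^\circ\,m=3$, so $(\mu^\circ,m)\in\{(1,3),(3,1)\}$. The upper bound on the Betti numbers proved with L\^e gives $b_{n-1}(\mfo)\le\sum_i\mu_i^\circ=\mu^\circ$, and since we have just shown $b_{n-1}=2$ this forces $\mu^\circ\ge2$; the only remaining possibility is $\mu^\circ=3$ and $m=1$. Hence $\mult_\0|\Sigma f|=1$, i.e. $C$ is smooth and transversely intersected by $V(z_0)$ at $\0$, as claimed.
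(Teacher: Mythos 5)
Your reduction to the rank of the L\^e-module map and your treatment of the ``furthermore'' clause are fine --- the latter is essentially the paper's own argument: $\lambda^1=m\,\mu^\circ=3$, \propref{prop:inj} gives $\tilde b_{n-1}\leq \mu^\circ$, and $\tilde b_{n-1}=2$ forces $(\mu^\circ,m)=(3,1)$. But the central step, that $\tilde b_{n-1}=\tilde b_n=2$ and that there is no torsion, is exactly what you leave unproved: ``it should force,'' ``I expect,'' and ``a workable route'' are not an argument, and the routes you gesture at are not the ones that work. The local-system/eigenvalue analysis does not in any evident way force $\operatorname{rank}\partial=1$, and a L\^e--Iomdine-type degeneration replaces $f$ by a function with an isolated singularity, from which there is no clear way to transport back information about $\widetilde H^{n-1}(F_{f,\0};\Z)$, let alone about its torsion; moreover the ``already-settled'' \propref{prop:1=2} and \propref{prop:0=2} each terminate in an unresolved dichotomy, so dropping into those cases could not single out an answer even if the transport worked.

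The two ingredients you are missing are both already available. First, \propref{prop:equal2}, which is a consequence of A'Campo's theorem that the Lefschetz number of the Milnor monodromy is zero: if $\tilde b_{n-1}\in\{0,1\}$, then $\lambda^0\neq\lambda^1$. Since here $\lambda^0=\lambda^1$, this rules out $\tilde b_{n-1}=0$ and $\tilde b_{n-1}=1$; combined with the strict upper bound $\tilde b_{n-1}+\tau_\mfp<\lambda^1=3$ from \thmref{thm:bound}, this pins down $\tilde b_{n-1}=2$, and then $\tilde b_n=2$ by the Euler-characteristic identity you already noted. This is your missing lower bound. Second, the torsion vanishing is not a separate ``eigenvalue bookkeeping'' problem: \thmref{thm:bound} is stated with the torsion numbers built in, so once $\tilde b_{n-1}=2$ the same strict inequality $2+\tau_\mfp<3$ gives $\tau_\mfp=0$ for every prime $\mfp$, i.e., $\widetilde H^{n}(F_{f,\0};\Z)\cong\Z^2$ on the nose. (A side remark: you also reversed the L\^e-module map; in the paper $\partial\colon\Z^{\lambda^1}\to\Z^{\lambda^0}$ with $\ker\partial\cong\widetilde H^{n-1}$ and $\coker\partial\cong\widetilde H^{n}$. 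This is harmless here since the matrix is square, but your placement of the torsion is not the one the machinery actually provides.)
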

 
 \smallskip

\vskip 0.2in

This paper is organized as follows. In Section 2, we will give definitions and prior results. Section 3 is devoted to proving the propositions/cases given above. Section 4 contains examples and mentions when we have none. Section 5 contains concluding remarks.

\medskip

\section{Definitions and prior results}

\smallskip
 
We should be clear from the beginning that, when we look at intersection numbers,  we deal with analytic cycles, {\bf not} cycle classes, and our intersection theory is the simple theory using proper intersections inside the complex manifold $\U$; one can find this in \cite{fulton} in Section 1.5, Section 8.2, and Example 11.4.4.

\medskip

Our assumption that 
$$\dim_\0\Sigma\big(f_{|_{V(z_0)}}\big) \ = \ \dim_\0V\left(z_0, \frac{\partial f}{\partial z_1}, \dots, \frac{\partial f}{\partial z_n}\right) \ = \ 0$$ is precisely equivalent to saying that $V\left(\frac{\partial f}{\partial z_1}, \dots, \frac{\partial f}{\partial z_n}\right)$ is purely $1$-dimensional (and not empty) at the origin and is properly intersected at the origin by $V(z_0)$.

In terms of analytic cycles, 
$$
V\left(\frac{\partial f}{\partial z_1}, \dots, \frac{\partial f}{\partial z_n}\right) \ = \ \Gamma^1_{f,\mbf z} \ + \ \Lambda^1_{f, \mbf z},
$$
where $\Gamma^1_{f,\mbf z}$ is the (possibly non-reduced) relative polar curve of $f$ of Hamm, L\^e, and Teissier, which consists of components not contained in $\Sigma f$, and $\Lambda^1_{f, \mbf z}$ is the $1$-dimensional L\^e cycle, which consists of components which are contained in $\Sigma f$. See Definition 1.11 of \cite{lecycles}. 
Note that $V\left(\frac{\partial f}{\partial z_0}\right)$ necessarily intersects $\Gamma^1_{f,\mbf z}$ properly at $\0$, and that $V(z_0)$ intersects $\Lambda^1_{f, \mbf z}$ properly at $\0$ by our assumption.

Letting $C$'s denote the underlying reduced components of $\Sigma f$ at $\0$, at the origin, we have
$$
\Lambda^1_{f, \mbf z}  \ = \ \sum_C\mu^\circ_C \,C,
$$
where $\mu^\circ_C$ is the Milnor number of $f$ restricted to a generic hyperplane slice, at a point $\mbf p$ on $C-\{\0\}$ close to $\0$. See Remark 1.19  of \cite{lecycles}.

The intersection numbers $\left(\Gamma^1_{f, \mbf z} \cdot V\left(\frac{\partial f}{\partial z_0}\right)\right)_\0$ and $\left(\Lambda^1_{f, \mbf z} \cdot V(z_0)\right)_\0$ are, respectively, the {\bf L\^e numbers $\lambda^0=\lambda^0_{f,\mbf z}$ and $\lambda^1=\lambda^1_{f,\mbf z}$ (at the origin)}. See Definition 1.11 of \cite{lecycles}. Note that
$$\lambda^1=\sum_C \left(C\cdot V(z_0)\right)_\0\mu^\circ_C.$$

\bigskip

We wish now to discuss the main result of \cite{lemassey}. Note that $\widetilde H^{n-1}(F_{f,\0};\Z)$ is free abelian, while $\widetilde H^{n}(F_{f,\0};\Z)$ may have torsion. We write
$$
\widetilde H^{n-1}(F_{f,\0};\Z)\cong\Z^{\tilde b_{n-1}} \ \textnormal{ and }\ \widetilde H^{n}(F_{f,\0};\Z)\cong\Z^{\tilde b_{n}}\oplus T,
$$
where $T$ consists of pure torsion. For each prime number $\mfp$, we write $\tau_\mfp$ for the number of cyclic summands of $T$ with $\mfp$-torsion, i.e., the number of direct summands of $T$ of the form $\Z/\mfp^k\Z$.

The Universal Coefficient Theorem tells us that
$$\widetilde H^{n-1}(F_{f,\0}; \Z/\mfp\Z)\cong \left(\Z/\mfp\Z\right)^{\tilde b_{n-1}+\,\tau_\mfp} \ \textnormal{ and }\  \widetilde H^{n}(F_{f,\0};\Z/\mfp\Z)\cong \left(\Z/\mfp\Z\right)^{\tilde b_{n}+\,\tau_\mfp}.$$

\medskip

Now Theorem 2.5 of  \cite{lemassey} tells us:

\begin{thm}\label{thm:bound} 

\phantom{stuff}

\medskip

\noindent Either 

\medskip

\noindent $\bullet$ \ $\lambda^0=0$, $\Sigma f$ is smooth at $\0$ and $f$ defines a 1-parameter family of isolated singularities with constant Milnor number $\mu$, in which case $\lambda^0=0$,  $\widetilde H^{n-1}(F_{f, \mathbf 0};\ \Z)\cong \Z^{\mu}$, and $\widetilde H^{n}(F_{f, \mathbf 0};\ \Z)=0$,

\medskip

\noindent or

\medskip

\noindent $\bullet$ \ $\lambda^0\neq 0$ and, for all primes $\mfp$, there is a strict inequality  \ $\tilde b_{n-1}+\,\tau_\mfp<\lambda^1$ and, equivalently, $\tilde b_{n}+\,\tau_\mfp<\lambda^0$.
\end{thm}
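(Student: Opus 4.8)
The plan is to collapse the entire statement to a single question about one boundary map and then isolate where the arithmetic content lives. First I would invoke the handle decomposition furnished by the L\^e-cycle theory: since $\dim_\0\Sigma f=1$ and $\mbf z$ is prepolar, $F_{f,\0}$ has, up to homotopy, the structure of a finite CW-complex with a single $0$-cell, exactly $\lambda^1$ cells of dimension $n-1$, and exactly $\lambda^0$ cells of dimension $n$, and no others. This is the common source of the classical Morse-type bounds $\tilde b_{n-k}\le\lambda^k$ and of the Euler-characteristic identity $\tilde\chi(F_{f,\0})=(-1)^{n-1}\lambda^1+(-1)^n\lambda^0$. The reduced cellular cochain complex then degenerates to a single homomorphism $\partial\colon\Z^{\lambda^0}\to\Z^{\lambda^1}$, and I would read off everything from its Smith normal form.

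Writing $\rho:=\rank_\Q\partial$ and $\rho_\mfp:=\rank_{\Z/\mfp\Z}(\partial\otimes\Z/\mfp\Z)$, the universal-coefficient bookkeeping recorded in Section~2 gives $\tilde b_{n-1}=\lambda^1-\rho$ and $\tilde b_n=\lambda^0-\rho$, while the torsion of $\widetilde H^n(F_{f,\0};\Z)$ is exactly the torsion of $\coker\partial$, so that $\tau_\mfp=\rho-\rho_\mfp$. Substituting yields the two identities
$$\tilde b_{n-1}+\tau_\mfp=\lambda^1-\rho_\mfp\qquad\text{and}\qquad \tilde b_n+\tau_\mfp=\lambda^0-\rho_\mfp.$$
From these the claimed equivalence of the two strict inequalities is immediate, since both are literally the assertion $\rho_\mfp\ge 1$, and the corresponding non-strict inequalities hold automatically because $\rho_\mfp\ge 0$. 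When $\lambda^0=0$ the complex degenerates to $\partial=0$, so $F_{f,\0}$ is a wedge of $\lambda^1$ spheres of dimension $n-1$; moreover $\0\in V(\partial f/\partial z_0)$ and the intersection $\Gamma^1\cdot V(\partial f/\partial z_0)$ is proper, so $\lambda^0=0$ forces $\Gamma^1$ to be empty at $\0$, which identifies $f$ with a $\mu$-constant family of isolated singularities along the (then smooth, transversally sliced) $\Sigma f$ with $\lambda^1=\mu$. This gives the first bullet.

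Everything therefore rests on the single claim that, when $\lambda^0\ne 0$, one has $\rho_\mfp\ge 1$ for \emph{every} prime $\mfp$; equivalently, that $\partial$ is divisible by no prime, i.e.\ its first elementary divisor is $1$ and $\partial\otimes\Z/\mfp\Z\ne 0$ for all $\mfp$. This is the heart of the matter and the step I expect to be genuinely hard, precisely because it is an integral, all-primes nondegeneracy statement that cannot be detected by ranks over $\Q$ alone. My plan here is to identify $\partial$ with the interaction between the $\lambda^0$ vanishing cycles created at the point stratum along the relative polar curve $\Gamma^1$ and the $\lambda^1$ transverse vanishing cycles carried by the internal local system along $\Sigma f$, and then to exploit the Verdier self-duality of the vanishing-cycle complex $\phi_f$ as a perverse sheaf: self-duality pairs the degree-$(n-1)$ and degree-$n$ data and forces the associated variation map to be nondegenerate exactly when the point stratum contributes, that is, exactly when $\lambda^0\ne 0$. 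The delicate part will be upgrading this nondegeneracy from $\Q$ to each $\Z/\mfp\Z$; I would attempt this by realizing the relevant classes as honest integral intersection cycles whose coefficients are governed by the multiplicities $\mu^\circ_C$ and the intersection numbers $(C\cdot V(z_0))_\0$, and arguing that no single prime divides all of them. This coprimality is where the real work lies, and it is presumably the same mechanism that, pushed further, produces the restriction on admissible torsion primes (to $3$ and primes congruent to $1$ modulo $6$) appearing in the later propositions.
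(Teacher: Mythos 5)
First, a point of reference: the paper never proves this statement at all --- it is imported verbatim as Theorem 2.5 of \cite{lemassey}, so there is no internal proof to compare you against, and your attempt must stand on its own. Much of your skeleton is genuinely right. Massey's cell-decomposition theorem (one $0$-cell, $\lambda^1$ cells of dimension $n-1$, $\lambda^0$ cells of dimension $n$, for prepolar coordinates) is a real theorem; your Smith-normal-form bookkeeping $\tilde b_{n-1}+\tau_\mfp=\lambda^1-\rho_\mfp$ and $\tilde b_{n}+\tau_\mfp=\lambda^0-\rho_\mfp$ is correct; and it does reduce both strict inequalities to the single claim $\rho_\mfp\geq 1$, i.e., $\partial\otimes\Z/\mfp\Z\neq 0$, for every prime $\mfp$. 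The $\lambda^0=0$ bullet is also fine, granting the citable (but deep) equivalence between an empty relative polar curve and a $\mu$-constant family along a smooth $\Sigma f$.

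The gap is that the claim $\lambda^0\neq 0\Rightarrow\partial\otimes\Z/\mfp\Z\neq 0$ \emph{is} the theorem, and you do not prove it: you offer a plan and concede twice that its decisive steps are missing. Worse, the mechanism you propose for the mod-$\mfp$ upgrade --- that no single prime divides all of the coefficients $\mu^\circ_C$ and $(C\cdot V(z_0))_\0$ --- fails in general. Take $f=z_2^3+(z_1^2-z_0^3)^2$: here $\Sigma f=V(z_2,\,z_1^2-z_0^3)$ is a cusp $C$ with $\mult_\0 C=(C\cdot V(z_0))_\0=2$ and transverse singularity type $A_2$, so $\mu^\circ_C=2$, the polar curve is nonempty, and every coefficient in sight is even; whatever forces $\partial\not\equiv 0\ (\operatorname{mod}\ 2)$ in this example, it is not coprimality of those numbers (and the entries of $\partial$ are not those intersection numbers in any case). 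The honest all-primes input has to be a unimodularity statement --- local Picard--Lefschetz/Seifert-form nondegeneracy at the points where the polar curve meets nearby fibers, together with an argument that these local contributions survive into the global boundary map; your appeal to ``Verdier self-duality forces the variation map to be nondegenerate exactly when $\lambda^0\neq 0$'' gestures in that direction but is not a theorem as stated, and its mod-$\mfp$ version is precisely what is at stake. As written, your argument establishes the non-strict bounds and the equivalence of the two strict inequalities, but not the strict inequality itself.
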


\medskip

\vskip 0.15in

We will now define the L\^e {\bf modules}.  We fix a base ring 
$R$ which is a principal ideal domain (e.g., $\Z$ or any field), and consider the shifted constant sheaf (complex) $R_{\U}^\bullet[n+1]$; this is a {\it perverse} sheaf (see, for instance,  \cite{BBD}, \cite{kashsch},  \cite{dimcasheaves}).  It follows from the general theory that the shifted vanishing cycle complex along $f$, $\Pdot:=\phi_f[-1]R_{\U}^\bullet[n+1]$, is also a perverse sheaf, as are $\psi_{z_0}[-1]\Pdot$ and $\phi_{z_0}[-1]\Pdot$, the shifted nearby and vanishing cycles of $\Pdot$ along $z_0$. Furthermore the origin is an isolated point in the support of $\psi_{z_0}[-1]\Pdot$ and $\phi_{z_0}[-1]\Pdot$, and there is a canonical morphism 
$$\psi_{z_0}[-1]\Pdot\xrightarrow{{ \ \operatorname{can}}_{z_0} \ }\phi_{z_0}[-1]\Pdot.
$$

\medskip

 \noindent As discussed in \cite{lemodtrace}, the stalk cohomologies in degree $0$ of $\psi_{z_0}[-1]\Pdot$ and $\phi_{z_0}[-1]\Pdot$ (the {\bf L\^e modules}) are free modules of ranks $\lambda^1$ and $\lambda^0$, respectively, i.e.,
 $$
 H^0(\psi_{z_0}[-1]\Pdot)_\0\cong R^{\lambda^1}\hskip 0.2in\textnormal{ and }\hskip 0.2in  H^0(\phi_{z_0}[-1]\Pdot)_\0\cong R^{\lambda^0}
 $$
 and the map ${\operatorname{can}}_{z_0}$ induces the map $\partial$ from the introduction, i.e., an
 $R$-module homomorphism $\partial: R^{\lambda^1}\rightarrow R^{\lambda^{0}}$
 such that
 $$
 \ker\partial \cong \widetilde H^{n-1}(F_{f, \mathbf 0};\ R)\hskip 0.2in\textnormal{ and }\hskip 0.2in \coker\,\partial \cong \widetilde H^{n}(F_{f, \mathbf 0};\ R).
 $$
 
 \medskip
 
 In the context of a singular set of dimension 1, and recalling that $m:={\operatorname{mult}}_\0|\Sigma f|$, what we prove  in  \cite{lemodtrace} is that 
 \begin{thm}\label{thm:lemod}
 
 \phantom{stuff}
 
 \begin{itemize}
 \item There exists a $\Z$-module homomorphism $\partial: \Z^{\lambda^1}\rightarrow\Z^{\lambda^{0}}$
 such that
 $$
 \ker\partial \cong \widetilde H^{n-1}(F_{f, \mathbf 0};\ \Z)\hskip 0.2in\textnormal{ and } \coker\,\partial \cong \widetilde H^{n}(F_{f, \mathbf 0};\ \Z),
 $$
 where $\widetilde H^\bullet$ denotes reduced cohomology.
 
 \medskip
 
 \item For  $j=0,1$, the Milnor monodromy of $f$ induces a {\it L\^e-Milnor} monodromy automorphism $\alpha _j$ from $\Z^{\lambda^{j}}$ to itself which is compatible with the map $\partial$, i.e., $\partial\circ\alpha_1= \alpha_{0}\circ\partial$.
 
 \medskip
 
 \item All of the eigenvalues of each $\alpha_j$ are roots of unity, and so the characteristic polynomial of each $\alpha_j$ factors into cyclotomic polynomials.
 
 \medskip
 
 \item The traces, $\operatorname{tr} \alpha_j$, of $\alpha_0$ and $\alpha_1$ are given by
 $$
 \operatorname{tr} \alpha_0= (-1)^n(m-1)\hskip 0.2in\textnormal{ and }\hskip 0.2in \operatorname{tr} \alpha_1= (-1)^{n}\,m.
 $$

As all of the eigenvalues of $\alpha_0$ are roots of unity, it follows that  $m-1\leq \lambda^0$, with equality holding if and only if the characteristic polynomial of $\alpha_0$ is either $(t-1)^{\lambda^0}$ or $(t+1)^{\lambda^0}$.

 \medskip

 \item if $\mfp$ is a prime number, then tensoring $\partial$ with $\Z/\mfp\Z$ yields a linear transformation of  $\Z/\mfp\Z$-vector spaces  $ \partial^{\mfp}: \left(\Z/\mfp\Z\right)^{\lambda^1}\rightarrow \left(\Z/\mfp\Z\right)^{\lambda^{0}}$
  such that
 $$
 \ker\partial^{\mfp}\cong \widetilde H^{n-1}(F_{f, \mathbf 0};\ \Z/\mfp\Z)\hskip 0.2in\textnormal{ and } \coker\,\partial^{{}\,\mfp} \cong \widetilde H^{n}(F_{f, \mathbf 0};\ \Z/\mfp\Z).
 $$
 \end{itemize}
 \end{thm}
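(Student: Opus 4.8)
The plan is to establish the five assertions in order, taking as given the framework recalled just before the statement: $\Pdot$ and its slices $\psi_{z_0}[-1]\Pdot$, $\phi_{z_0}[-1]\Pdot$ are perverse, supported only at $\0$, with degree-$0$ stalks $R^{\lambda^1}$ and $R^{\lambda^0}$, and $\operatorname{can}_{z_0}$ induces $\partial$. The first bullet is then the case $R=\Z$: to pin down $\ker\partial$ and $\coker\partial$ I would feed the standard distinguished triangle
$$i^*\Pdot[-1]\longrightarrow\psi_{z_0}[-1]\Pdot\xrightarrow{\ \operatorname{can}_{z_0}\ }\phi_{z_0}[-1]\Pdot\longrightarrow i^*\Pdot,$$
where $i\colon V(z_0)\hookrightarrow\U$, into the long exact sequence of stalk cohomology at $\0$. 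Using $H^k(i^*\Pdot[-1])_\0=H^{k-1}(\Pdot)_\0\cong\widetilde H^{n-1+k}(F_{f,\0})$, which is nonzero only for $k=0,1$, the long exact sequence collapses to the four-term exact sequence $0\to\widetilde H^{n-1}(F_{f,\0})\to\Z^{\lambda^1}\xrightarrow{\partial}\Z^{\lambda^0}\to\widetilde H^{n}(F_{f,\0})\to 0$, giving both identifications at once.

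For the second bullet, the Milnor monodromy of $f$ is a natural automorphism $T_f$ of $\Pdot$ in the derived category; applying the functors $\psi_{z_0}[-1]$ and $\phi_{z_0}[-1]$ produces automorphisms whose degree-$0$ stalks are $\alpha_1$ and $\alpha_0$, and since $\operatorname{can}_{z_0}$ is a morphism of functors it commutes with $T_f$, giving $\partial\circ\alpha_1=\alpha_0\circ\partial$. For the third bullet I would invoke the Monodromy Theorem: $T_f$ acts quasi-unipotently on $\Pdot$ (the monodromy of an analytic map has eigenvalues that are roots of unity), and quasi-unipotence is preserved by the exact functors $\psi_{z_0}[-1]$, $\phi_{z_0}[-1]$ and by passing to $H^0$, so the eigenvalues of each $\alpha_j$ are roots of unity and the characteristic polynomials factor into cyclotomics.

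The heart of the matter is the fourth bullet, and this is the step I expect to be the main obstacle. Writing $\operatorname{Tr}(T_f;\mathbf K^\bullet):=\sum_k(-1)^k\operatorname{tr}\big(T_f\mid H^k(\mathbf K^\bullet)_\0\big)$ for the equivariant Euler characteristic, additivity over the triangle above gives
$$\operatorname{tr}\alpha_1=\operatorname{Tr}(T_f;\,i^*\Pdot[-1])+\operatorname{tr}\alpha_0=-\operatorname{Tr}(T_f;\Pdot_\0)+\operatorname{tr}\alpha_0,$$
so it remains to compute the two traces on the right geometrically via A'Campo's theorem. Because $\0$ is a singular point of $f$, A'Campo gives that the Lefschetz number of the Milnor monodromy vanishes; subtracting the $H^0(F_{f,\0})$ contribution, the reduced Lefschetz number is $-1$, and a short sign bookkeeping turns this into $\operatorname{Tr}(T_f;\Pdot_\0)=(-1)^{n+1}$. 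For $\operatorname{tr}\alpha_1$ I would use that $\Pdot$ restricts on $\Sigma f-\{\0\}$ to a shifted local system $\call[1]$ whose stalk at a nearby point $p$ is the $\widetilde H^{n-1}$ of the transverse Milnor fiber, with $T_f$ acting as the transverse Milnor monodromy; the nearby cycle identifies $\Z^{\lambda^1}=H^0(\psi_{z_0}[-1]\Pdot)_\0$, $T_f$-equivariantly, with $\bigoplus_p\call_p$ over the points of $\Sigma f\cap V(z_0-\epsilon)$ near $\0$, of which the genericity hypothesis $(C\cdot V(z_0))_\0=\mult_\0 C$ produces exactly $m$ distinct ones. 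A'Campo applied to each transverse isolated singularity gives trace $(-1)^n$ on its $\widetilde H^{n-1}$, whence $\operatorname{tr}\alpha_1=(-1)^n m$; substituting gives $\operatorname{tr}\alpha_0=(-1)^n m-(-1)^n=(-1)^n(m-1)$. The inequality $m-1\le\lambda^0$ with its equality case is then formal: the $\lambda^0$ eigenvalues of $\alpha_0$ are roots of unity, so $m-1=|\operatorname{tr}\alpha_0|\le\lambda^0$, and equality forces all eigenvalues equal to a common root of unity, necessarily $\pm1$ since the trace is real, i.e. the characteristic polynomial is $(t-1)^{\lambda^0}$ or $(t+1)^{\lambda^0}$.

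Finally, the fifth bullet is the framework of the first bullet run over the PID $R=\Z/\mfp\Z$; the only point to verify is that the construction commutes with $-\otimes_\Z\Z/\mfp\Z$, so that $\partial^{\mfp}=\partial\otimes\Z/\mfp\Z$. This holds because $H^0(\psi_{z_0}[-1]\Pdot)_\0$ and $H^0(\phi_{z_0}[-1]\Pdot)_\0$ are free and concentrated in a single degree, so base change commutes with taking $H^0$ and with $\operatorname{can}_{z_0}$, and the $\Z/\mfp\Z$-version of the four-term sequence then yields the stated kernel and cokernel, consistently with the Universal Coefficient Theorem. The delicate part throughout is the trace step: making precise the $T_f$-equivariant local-system description of $\Pdot$ along $\Sigma f$ and correctly tracking the shift-induced signs, so that the two A'Campo inputs assemble into the clean formulas $(-1)^n m$ and $(-1)^n(m-1)$.
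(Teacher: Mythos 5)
Your proposal is essentially correct, but note that the paper itself never proves this statement: Theorem~\ref{thm:lemod} is imported wholesale from \cite{lemodtrace} (``what we prove in \cite{lemodtrace} is that\dots''), so there is no in-paper argument to compare against, and what you have written is a reconstruction of the reference's machinery. Your reconstruction is the right one: the four-term sequence from the stalk long exact sequence of the triangle $i^*\Pdot[-1]\to\psi_{z_0}[-1]\Pdot\to\phi_{z_0}[-1]\Pdot\to i^*\Pdot$, using perversity and point-support to kill all stalk cohomology of the middle terms outside degree $0$; functoriality of $\operatorname{can}_{z_0}$ for the compatibility $\partial\circ\alpha_1=\alpha_0\circ\partial$; the equivariant decomposition of $H^0(\psi_{z_0}[-1]\Pdot)_\0$ into the $m$ stalks over $\Sigma f\cap V(z_0-\delta)$ (here you correctly invoke the genericity hypothesis $(C\cdot V(z_0))_\0=\mult_\0 C$, and the fact that $z_0$ restricted to $\Sigma f$ is an $m$-sheeted cover of a small punctured disk, so the points are honestly distinct); two applications of A'Campo; and flat base change for the mod-$\mfp$ statement, which is legitimate precisely because the two L\^e modules are free and the point-supported perverse sheaves have no stalk cohomology in degree $1$, so no Tor correction appears. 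Two spots deserve tightening. First, your third bullet leans on ``quasi-unipotence is preserved by the exact functors and by passing to $H^0$,'' which is not a citable theorem as stated; but you do not need it: your own bullet-four decomposition shows the eigenvalues of $\alpha_1$ are eigenvalues of local Milnor monodromies (roots of unity by the classical Monodromy Theorem), and for $\alpha_0$ the $\alpha_0$-invariant filtration $\operatorname{im}\partial\subseteq\Q^{\lambda^0}$ with quotient $\widetilde H^{n}(F_{f,\0};\Q)$ reduces the claim to the eigenvalues of $\alpha_1$ and of the Milnor monodromy at $\0$. Second, in bullet four you apply A'Campo ``to each transverse isolated singularity,'' which tacitly requires identifying the $T_f$-action on the stalk at $p$ with the transverse-slice monodromy (a local product-structure statement along the stratum); it is cleaner to apply A'Campo directly to the full local Milnor fiber $F_{f,p}$ at each point $p$, since its reduced cohomology is concentrated in degree $n-1$ (the perversity statement the paper records just before Proposition~\ref{prop:inj}), which yields the same trace $(-1)^n$ per point with nothing left to justify.
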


 \vskip 0.2in
 
The perversity of $\Pdot$ implies, among other things, that, for $\U$ small enough, for all components $C$ of $\Sigma f$, the cohomology sheaf $\mbf H^{-1}\big(\Pdot_{|_{C-\{\0\}}}\big)$ is a local system on $C-\{\0\}$ with stalk isomorphic to $\Z^{\mu^\circ_C}$.

 In classical terms, this is well-known; it says that, for $p\in C-\{\0\}$, (i) \ the reduced cohomology $\widetilde H^*(F_{f, p})$  is possibly non-zero only in degree $n-1$, (ii) \  the isomorphism-type of the module $\widetilde H^{n-1}(F_{f, p})$ is independent of $p\in C-\{\0\}$ and is free abelian of rank that we denoted above by $\mu^\circ_C$, and (iii) \ there is a monodromy action/homomorphism $\rho_C:\pi_1(C-\{\0\})\cong \Z\rightarrow \operatorname{Aut}(\Z^{\mu^\circ})$. Let $h_C$ denote the automorphism $\rho_C(1)$.

This internal local system monodromy is sometimes referred to as the {\it vertical monodromy}; this is to distinguish it from the Milnor monodromy induced by letting the value of $f$ travel around a small circle.

 Restricting our setting to a small neighborhood of the origin, the cosupport  condition for perverse sheaves (see also Siersma  \cite{siersmavarlad}) implies:
 
 \begin{prop}\label{prop:inj}
 $H^{-1}(\Pdot)_\0\cong\widetilde H^{n-1}(F_{f, \0}; \Z)$ injects  into $\bigoplus _C\ker\{\operatorname{id}-h_C\}$. In particular, $\tilde b_{n-1}\leq \sum_C \mu^\circ_C$.
\end{prop}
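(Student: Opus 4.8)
The plan is to deduce the injection directly from the cosupport condition that the perverse sheaf $\Pdot$ satisfies at the point stratum $\{\0\}$, combined with a computation of the cohomology of a punctured neighborhood. I work inside a small open ball $B$ about $\0$ and write $i\colon\{\0\}\hookrightarrow B$ for the inclusion of the origin and $j\colon B-\{\0\}\hookrightarrow B$ for the complementary open inclusion. Applying $i^*$ to the standard attaching triangle $i_*i^!\to\operatorname{id}\to Rj_*j^*\xrightarrow{+1}$ yields a distinguished triangle $i^!\Pdot\to i^*\Pdot\to i^*Rj_*j^*\Pdot\xrightarrow{+1}$, whose associated long exact sequence of stalk cohomology at $\0$ is the engine of the argument.

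First I would compute the third term. Since $\Pdot$ is cohomologically supported on the $1$-dimensional set $\Sigma f$, for $B$ small enough $H^k(i^*Rj_*j^*\Pdot)_\0$ is the hypercohomology $\mathbb H^k\big((B-\{\0\})\cap\Sigma f;\ \Pdot\big)$, and $(B-\{\0\})\cap\Sigma f$ is a disjoint union, over the components $C$, of punctured branches each homotopy equivalent to a circle. On $C-\{\0\}$ the complex $\Pdot$ is (quasi-isomorphic to) the local system $\call_C:=\mbf H^{-1}\big(\Pdot_{|_{C-\{\0\}}}\big)$ placed in degree $-1$, so the hypercohomology spectral sequence collapses and returns $H^p(S^1;\call_C)$ shifted by one. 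Using $H^0(S^1;\call_C)=\ker\{\operatorname{id}-h_C\}$ and $H^1(S^1;\call_C)=\coker\{\operatorname{id}-h_C\}$, I obtain $H^{-1}(i^*Rj_*j^*\Pdot)_\0\cong\bigoplus_C\ker\{\operatorname{id}-h_C\}$, the degree-$0$ term being $\bigoplus_C\coker\{\operatorname{id}-h_C\}$, which we will not need.

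Next I would invoke perversity. The cosupport condition for the middle-perversity perverse sheaf $\Pdot$ at the $0$-dimensional stratum $\{\0\}$ asserts that $H^k(i^!\Pdot)_\0=0$ for all $k<0$; in particular $H^{-1}(i^!\Pdot)_\0=0$. Feeding this vanishing into the long exact sequence of the triangle forces the map $H^{-1}(i^*\Pdot)_\0\to H^{-1}(i^*Rj_*j^*\Pdot)_\0$ to be injective. Since $H^{-1}(i^*\Pdot)_\0=H^{-1}(\Pdot)_\0\cong\widetilde H^{n-1}(F_{f,\0};\Z)$, this is exactly the claimed injection $\widetilde H^{n-1}(F_{f,\0};\Z)\hookrightarrow\bigoplus_C\ker\{\operatorname{id}-h_C\}$. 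The stated inequality $\tilde b_{n-1}\leq\sum_C\mu^\circ_C$ then follows by taking ranks, since each $\ker\{\operatorname{id}-h_C\}$ is a subgroup of $\Z^{\mu^\circ_C}$.

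I expect the only real care to be needed in the middle step: keeping the cohomological shifts straight so that the degree-$(-1)$ part of the link term is the invariants $\ker\{\operatorname{id}-h_C\}$ rather than the coinvariants, and confirming that $\Pdot$ restricts to a genuine shifted local system on each $C-\{\0\}$ (which is already recorded above as a consequence of perversity). The perversity input itself I treat as a black box: once the cosupport vanishing $H^{-1}(i^!\Pdot)_\0=0$ is in hand, injectivity is immediate from exactness.
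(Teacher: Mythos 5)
Your proof is correct and follows exactly the route the paper indicates: the paper offers no written-out argument, merely asserting that the proposition follows from the cosupport condition for perverse sheaves (citing Siersma), and your argument---the attaching triangle $i^!\Pdot\to i^*\Pdot\to i^*Rj_*j^*\Pdot\xrightarrow{+1}$, the identification of the link term with $\bigoplus_C H^{*+1}(S^1;\call_C)$, and the vanishing $H^{-1}(i^!\Pdot)_\0=0$ forcing injectivity---is precisely the standard derivation being invoked. The degree bookkeeping (invariants $\ker\{\operatorname{id}-h_C\}$ in degree $-1$, coinvariants in degree $0$) and the final rank inequality, which uses that $\widetilde H^{n-1}(F_{f,\0};\Z)$ is free abelian, are all handled correctly.
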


\bigskip

We have one other result to mention here.

\begin{prop}\label{prop:equal2} Suppose that $\tilde b_{n-1}$ equals 0 or 1. Then $\lambda^0\neq\lambda^1$.
\end{prop}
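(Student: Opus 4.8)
The plan is to argue by contradiction: assume $\lambda^0=\lambda^1$, call this common value $\lambda$, so that $\partial$ becomes a homomorphism $\Z^\lambda\to\Z^\lambda$ represented by a square integer matrix, and derive a contradiction from the trace formulas in \thmref{thm:lemod}. First I would pass to $\Q$-coefficients, writing $\partial_\Q:=\partial\otimes\operatorname{id}_\Q$, $K:=\ker\partial_\Q$, and $I:=\im\partial_\Q$. Since $\ker\partial\cong\Z^{\tilde b_{n-1}}$ we have $\dim_\Q K=\tilde b_{n-1}$, and because $\partial_\Q$ is a square map, rank--nullity forces $\dim_\Q(\Q^\lambda/I)=\dim_\Q K=\tilde b_{n-1}$; this quotient is $\coker\partial\otimes\Q$. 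The hypothesis $\tilde b_{n-1}\in\{0,1\}$ thus bounds the dimensions of both $K$ and the rational cokernel by $1$.

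Next I would exploit the compatibility $\partial\circ\alpha_1=\alpha_0\circ\partial$ from \thmref{thm:lemod}. It immediately gives that $\alpha_1$ preserves $K$ and that $\alpha_0$ preserves $I$, and that the isomorphism $\bar\partial\colon \Q^\lambda/K\xrightarrow{\ \sim\ }I$ induced by $\partial_\Q$ intertwines the map $\bar\alpha_1$ induced by $\alpha_1$ on $\Q^\lambda/K$ with the restriction $\alpha_0|_I$. Consequently $\operatorname{tr}(\bar\alpha_1)=\operatorname{tr}(\alpha_0|_I)$. Splitting the traces of $\alpha_1$ and $\alpha_0$ along the invariant subspaces $K$ and $I$ respectively,
$$\operatorname{tr}\alpha_1=\operatorname{tr}(\alpha_1|_K)+\operatorname{tr}(\bar\alpha_1),\qquad \operatorname{tr}\alpha_0=\operatorname{tr}(\alpha_0|_I)+\operatorname{tr}(\bar\alpha_0),$$
where $\bar\alpha_0$ is the map induced on $\Q^\lambda/I$. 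Subtracting and using the intertwining identity cancels the middle terms, leaving $\operatorname{tr}\alpha_1-\operatorname{tr}\alpha_0=\operatorname{tr}(\alpha_1|_K)-\operatorname{tr}(\bar\alpha_0)$.

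Finally I would compute both sides. The left-hand side is $(-1)^n m-(-1)^n(m-1)=(-1)^n$ by the trace formulas, hence is odd. For the right-hand side, $\alpha_1|_K$ and $\bar\alpha_0$ act on rational vector spaces of dimension at most $1$, and every eigenvalue of $\alpha_0$ and $\alpha_1$ is a root of unity; a rational root of unity is $\pm1$, so each of $\operatorname{tr}(\alpha_1|_K)$ and $\operatorname{tr}(\bar\alpha_0)$ lies in $\{-1,0,1\}$, being $0$ exactly when the space is $0$-dimensional. Their difference therefore is even: when $\tilde b_{n-1}=0$ both terms vanish, and when $\tilde b_{n-1}=1$ both terms equal $\pm1$, so the difference is one of $-2,0,2$. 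An even number cannot equal the odd number $(-1)^n$, which is the desired contradiction, so $\lambda^0\neq\lambda^1$.

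The main obstacle is the bookkeeping at the integral-versus-rational level: the clean conjugacy and intertwining only hold after tensoring with $\Q$, and I must be sure the one-dimensional invariant pieces genuinely carry root-of-unity eigenvalues. This follows because the eigenvalue of $\alpha_1$ on the line $K$ (resp. of $\alpha_0$ on the quotient line $\Q^\lambda/I$) appears as a diagonal entry in a block-triangular form of $\alpha_1$ (resp. $\alpha_0$), hence is a true eigenvalue of the integral automorphism and so a root of unity by \thmref{thm:lemod}; being rational, it is forced to equal $\pm1$.
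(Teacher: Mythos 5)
Your argument is correct, but it is not the route the paper takes. The paper's proof is shorter and invokes A'Campo's theorem directly: since $\lambda^0=\lambda^1$ makes $\partial$ a square matrix, one gets $\tilde b_n=\tilde b_{n-1}$; if both are $0$, the Lefschetz number of the Milnor monodromy on $F_{f,\0}$ equals $1$, and if both are $1$, the Milnor monodromy acts on the rank-one pieces $\widetilde H^{n-1}$ and $\widetilde H^{n}$ (mod torsion) with trace $\pm 1$ each, so the Lefschetz number is $1$ plus an even number; in either case it is nonzero, contradicting A'Campo's vanishing. You instead stay entirely inside the L\^e-module formalism of \thmref{thm:lemod}: the intertwining $\partial\circ\alpha_1=\alpha_0\circ\partial$ lets you cancel the trace of $\alpha_0$ restricted to $\im\partial_\Q$ against the trace induced by $\alpha_1$ on $\Q^\lambda/\ker\partial_\Q$, and you then play the odd number $\operatorname{tr}\alpha_1-\operatorname{tr}\alpha_0=(-1)^n$ against the even difference of traces on the at-most-one-dimensional kernel and cokernel, where integrality plus the root-of-unity property force eigenvalues $\pm1$. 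Both proofs are, at bottom, parity arguments, and they are closely related at a deeper level: the trace formulas of \thmref{thm:lemod} are themselves the avatar of A'Campo's theorem in the L\^e-module setting, so your $(-1)^n$ on the left-hand side plays exactly the role of the uncancelled $1$ contributed by $H^0$ in the Lefschetz number. What your route buys is self-containedness relative to this paper: you need no separate citation of A'Campo and no reference to the Milnor monodromy action on $\widetilde H^*(F_{f,\0};\Z)$, only \thmref{thm:lemod} as a black box, together with careful linear algebra (invariant subspaces, block-triangular characteristic polynomials) that you correctly supply. What the paper's route buys is brevity and a direct appeal to a classical result.
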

\begin{proof} Suppose that $\tilde b_{n-1}=0$ and $\lambda^0=\lambda^1$. Then we would also have $\tilde b_{n}=0$, which would contradict A'Campo's Theorem in \cite{acamp} that the Lefschetz number of the monodromy on the Milnor fiber is 0.

Suppose that $\tilde b_{n-1}=1$ and $\lambda^0=\lambda^1$. Then we would also have $\tilde b_{n}=1$, which would  imply that the traces of the Milnor automorphisms on $\widetilde H^{n-1}(F_{f, \0}; \Q)$ and $\widetilde H^{n}(F_{f, \0}; \Q)$ are each $\pm1$. This would again contradict A'Campo's Theorem.
\end{proof}

\medskip

Now we are prepared to prove all of the propositions/cases from the introduction.

\medskip

\section{Proofs}

\medskip

\noindent {\bf Proof of \propref{prop:0=0}.} Suppose that $\lambda^0=0$.

\smallskip

$\lambda^0=0$ if and only if the relative polar curve near the origin is empty as a set, or is 0 as a cycle; it is well-known (see, for instance,  Theorem 5.1 of \cite{lemassey}) that this is equivalent to: 

\bigskip

\noindent $\Sigma f$ is smooth at $\0$, and so $\Lambda^1=\mu^\circ C$, for some positive integer $\mu^\circ$ and some irreducible curve $C$ which is smooth at $\0$, $V(z_0)$ transversely intersects $C$ at $\0$, and  $f_{z_0}(z_1, \dots, z_n)=f(z_0, \dots, z_n)$ defines a 1-parameter family of isolated singularities along $C$ with constant Milnor number $\mu^\circ$.

\bigskip

Moreover, these equivalent conditions imply that $\widetilde H^{n-1}(F_{f, \mathbf 0};\ \Z)\cong \Z^{\mu^\circ}$ and $\widetilde H^{n}(F_{f, \mathbf 0};\ \Z)=0$. $\qed$

\bigskip

\noindent {\bf Proof of \propref{prop:1=1}.} Suppose $\lambda^1=1$ and $\lambda^0\geq 1$.

\smallskip
 In this case, $\Lambda^1_{f, \mbf z}=C$, where $C$ is smooth at $\0$, $\mu^\circ_C=1$, and $V(z_0)$ transversely intersects $C$ at $\0$. After an analytic change of coordinates, we may assume that $C$ is a complex line, and we are in the case that Siersma refers to as an {\it isolated line singularity} in \cite{siersmaisoline}. As we are assuming that $\lambda^0\neq 0$, Siersma proves in this case that $F_{f, \0}$ has the homotopy-type of a bouquet of $n$-spheres; the number of $n$-spheres in the bouquet is necessarily $\lambda^0-1$. 
 
 Therefore,  
 $$\widetilde H^{n-1}(F_{f, \mathbf 0};\ \Z)=0\hskip 0.2in\textnormal{ and }\hskip 0.2in \widetilde H^{n}(F_{f, \mathbf 0};\ \Z)\cong \Z^{\lambda^0-1}.$$
 Note that \propref{prop:equal} implies $\lambda^0\neq 1$.
 
 \smallskip
 
In terms of the results discussed in the prior section, the above result on the cohomology of $F_{f, \0}$ follows at once from \thmref{thm:bound}.\qed

\bigskip

\noindent {\bf Proof of \propref{prop:0=1}.} Suppose $\lambda^0=1$ and $\lambda^1\geq 2$.

\smallskip

In this case, \thmref{thm:bound} immediately yields 
$$\widetilde H^{n-1}(F_{f, \mathbf 0};\ \Z)\cong\Z^{\lambda^1-1}\hskip 0.2in\textnormal{ and }\hskip 0.2in\widetilde H^{n}(F_{f, \mathbf 0};\ \Z)=0.$$ But there is a great deal more that we can say.

We have
$$
1=\lambda^0=\left(\Gamma^1_{f, \mbf z}\cdot V\left(\frac{\partial f}{\partial z_0}\right)\right)_\0\geq\big(\mult_\0 \Gamma^1_{f, \mbf z}\big)\left(\mult_\0 \left(\frac{\partial f}{\partial z_0}\right)\right),
$$
by Corollary 12.4 of \cite{fulton}. Therefore,
$$
\mult_\0 \Gamma^1_{f, \mbf z}=1, \ \  \mult_\0 \left(\frac{\partial f}{\partial z_0}\right)=1, \  \textnormal{ and so }  \  \mult_\0 f=2.
$$

Furthermore, as $\lambda^0=1$, the trace $\operatorname{tr} \alpha_0$ from \thmref{thm:lemod} must be $\pm1$, and so ${\operatorname{mult}}_\0|\Sigma f|=2$. There are two cases: (1) \ $\Sigma f$ has a single irreducible component $C$ of multiplicity 2 at the origin or (2) \ $\Sigma f$ has two smooth irreducible components which are transversely intersected by $V(z_0)$ at the origin.

In case (1), $\Lambda^1=\mu^\circ C$ and $\lambda^1=(\mult_\0 C)\mu^\circ_C=2\mu^\circ_C$. But then, by \propref{prop:inj}, $\tilde b_{n-1}=2\mu^\circ_C-1\leq \mu^\circ_C$. So we conclude that $\mu^\circ_C=1$; hence $\lambda^1=2$, and  $\widetilde H^{n-1}(F_{f, \mathbf 0};\ \Z)\cong\Z$. Furthermore, by \propref{prop:inj}, the internal monodromy of the vanishing cycles around $C-\{\0\}$ must be the identity. \qed
 
\bigskip

\noindent {\bf Proof of \propref{prop:1=2}.} Suppose $\lambda^1=2$ and $\lambda^0\geq 2$.

\smallskip

In this case, \thmref{thm:bound} tells us that, for all primes $\mfp$, $\tilde b_{n-1}+\,\tau_\mfp$ equals $0$ or $1$; this of course allows for the sum to be $0$ for some prime(s) $\mfp_1$ but equal to $1$ for other prime(s) $\mfp_2$. There are two cases:

\smallskip

\noindent (1) Suppose that $\tilde b_{n-1}=0$. Then, for all primes $\mbf p$, $\tau_\mfp$ equals $0$ or $1$. Thus, 
$$\widetilde H^{n-1}(F_{f, \mathbf 0};\ \Z)=0\hskip 0.2in\textnormal{ and }\hskip 0.2in \widetilde H^{n}(F_{f, \mathbf 0};\ \Z)\cong \Z^{\lambda^0-2}\oplus \big(\Z/b\Z\big)$$  for some $b\geq 1$ (here, $b$=1 or $b$ is a product of powers of primes $\mfp$ such that $\tau_\mfp=1$). Note that \propref{prop:equal} implies $\lambda^0\neq 2$.

\bigskip

\noindent (2) Suppose that $\tilde b_{n-1}\neq 0$. Then $\tilde b_{n-1}=1$ and for all primes $\mfp$, $\tau_\mfp=0$. Thus, 
$$\widetilde H^{n-1}(F_{f, \mathbf 0};\ \Z)\cong \Z\hskip 0.2in\textnormal{ and }\hskip 0.2in\widetilde H^{n}(F_{f, \mathbf 0};\ \Z)\cong \Z^{\lambda^0-1}.$$
Again \propref{prop:equal} implies $\lambda^0\neq 2$.
\bigskip

However, there are 3 other cases which we can consider, based on the nature of $|\Sigma f|$. (a) \ $|\Sigma f|$ consists of two smooth irreducible components at the origin, both with $\mu^\circ =1$. \ (b) \ $|\Sigma f|$ has a single irreducible component $C$ of multiplicity 2 at the origin with $\mu^\circ_C=1$,  \ and (c) \ $|\Sigma f|$ has a single smooth component $C$ at the origin such that $\mu^\circ_C=2$. It is this final case where we have more to say.

In Case (c), $m=\mult_\0|\Sigma f|=1$ and so, by \thmref{thm:lemod}, $\operatorname{tr} \alpha_1=\pm 1$. However, the characteristic polynomial  ${\operatorname{char}}_{\alpha_1}(t)$ of $\alpha_1$ has degree 2 and is product of cyclotomic polynomials. It follows that ${\operatorname{char}}_{\alpha_1}(t)=t^2\pm t+1$, which are both irreducible over $\Z$ (or $\Q$). It follows that we cannot be in Case (2) from above, for if $\widetilde H^{n-1}(F_{f, \mathbf 0};\ \Z)\cong \Z$, then the characteristic polynomial of the Milnor monodromy action on $\widetilde H^{n-1}(F_{f, \mathbf 0};\ \Z)$ would have to divide ${\operatorname{char}}_{\alpha_1}(t)$.

\medskip

Therefore, if $\lambda^1=2$,  $\lambda^0\geq 2$, and $m=1$, we conclude that we are in Case (2), $\lambda^0\geq 3$,  $$\widetilde H^{n-1}(F_{f, \mathbf 0};\ \Z)=0\hskip 0.2in\textnormal{ and }\hskip 0.2in \widetilde H^{n}(F_{f, \mathbf 0};\ \Z)\cong \Z^{\lambda^0-2}\oplus \big(\Z/b\Z\big)$$  for some $b\geq 1$. However, there is more that we say about the prime powers that divide $b$.

For each prime $\mfp$, the question is: does ${\operatorname{char}}_{\alpha_1}(t)=t^2\pm t+1$ factor in $(\Z/\mfp\Z)[t]$? A little number theory tells one that if ${\operatorname{char}}_{\alpha_1}(t)=t^2\pm t+1$ factors in $(\Z/\mfp\Z)[t]$, then $\mfp$ is either $3$ or congruent to $1$ modulo $6$. Thus in the isomorphism $\widetilde H^{n}(F_{f, \mathbf 0};\ \Z)\cong \Z^{\lambda^0-2}\oplus \big(\Z/b\Z\big)$, either $b=1$ or $b$ is a product of prime powers where the primes are either $3$ or congruent to $1$ modulo $6$. \qed

\bigskip

\noindent {\bf Proof of \propref{prop:0=2}.} Suppose $\lambda^0=2$ and $\lambda^1\geq 3$.

\smallskip

Unsurprisingly, the analysis here is analogous to the $\lambda^1=2$ situation above. In our current case, \thmref{thm:bound} tells us that  all primes $\mfp$, $\tilde b_{n}+\,\tau_\mfp$ equals 0 or 1. There are two cases.

\smallskip

\noindent (1) Suppose that $\tilde b_{n}=0$. Then, for all primes $\mbf p$, $\tau_\mfp$ equals $0$ or $1$. Thus, 
$$\widetilde H^{n-1}(F_{f, \mathbf 0};\ \Z)\cong\Z^{\lambda^1-2}\hskip 0.2in\textnormal{ and }\hskip 0.2in \widetilde H^{n}(F_{f, \mathbf 0};\ \Z)\cong \Z/m\Z$$  for some $m\geq 1$.

\bigskip

\noindent (2) Suppose that $\tilde b_{n}\neq 0$. Then $\tilde b_{n}=1$ and for all primes $\mfp$, $\tau_\mfp=0$. Thus, 
$$\widetilde H^{n-1}(F_{f, \mathbf 0};\ \Z)\cong \Z^{\lambda^1-1}\hskip 0.2in\textnormal{ and }\hskip 0.2in\widetilde H^{n}(F_{f, \mathbf 0};\ \Z)\cong \Z.$$

\bigskip

Now assume that $m=\mult_\0\Sigma f=2$. Then, analogous to what we saw in the $\lambda^1=2$ situation, \thmref{thm:lemod} implies that $\operatorname{tr} \alpha_0=\pm 1$ and so ${\operatorname{char}}_{\alpha_0}(t)=t^2\pm t+1$. Thus we must be in Case (1) above and either $m=1$ or $m$ is a product of prime powers where the primes are either $3$ or congruent to $1$ modulo $6$. \qed

\bigskip

\noindent {\bf Proof of \propref{prop:equal}.} Suppose $\lambda^0=\lambda^1= 3$.

\smallskip

\thmref{thm:bound} tells us that $\tilde b_{n-1}<3$. \propref{prop:equal} implies $\tilde b_{n-1}\neq 0, 1$. Thus, we must have $\tilde b_{n-1}=\tilde b_n=2$. But then \thmref{thm:bound} also tells us that for all primes $\mfp$, $\tau_\mfp=0$, i.e., there can be no torsion. Thus
$$
\widetilde H^{n-1}(F_{f, \mathbf 0};\ \Z)\cong \Z^2\hskip 0.2in\textnormal{ and }\hskip 0.2in \widetilde H^{n}(F_{f, \mathbf 0};\ \Z)\cong \Z^2.
$$

\smallskip

 Now suppose that $|\Sigma f|$ has exactly one irreducible component $C$. Then $\mult_\0 C\neq 3$, for if it were, then $\mu^\circ_C$ would have to be 1, and \propref{prop:inj} would tell us that $\tilde b_{n-1}\leq 1$; a contradiction. It is also true that $\mult_\0 C\neq 2$, for if it were $\lambda^1$ would be even; a contradiction. Therefore  $C$ is smooth and transversely intersected by $V(z_0)$ at $\0$, and $\mu^\circ_C$ must be 3. \qed

\medskip

\section{Examples or the lack thereof}

\medskip

Here, we wish to give examples of the various cases from the introduction or say that no such examples are known (to us).

\medskip

\begin{exm} Consider the case where $\lambda^0=0$. Examples of this are trivial to produce, simply take a product of an isolated singularity with a complex line, e.g., $f(z_0, z_1, z_2)=z_1^2+z_2^2$. 
\end{exm}

\medskip

\begin{exm} Consider the case where $\lambda^1=1$ and $\lambda^0\geq 1$. It is easy to produce examples of this case, for instance $f=z_2^2-z_1^3-z_0z_1^2$.
\end{exm}

\medskip

\begin{rem} Consider the case where $\lambda^0=1$ and $\lambda^1\geq 2$. {\bf We do not know if this case can actually occur.} Certainly it cannot occur for a smooth critical locus, since we showed $m=\mult_\0|\Sigma f|$ must be 2.
\end{rem}

\medskip

\begin{exm} Consider the case where $\lambda^1=2$ and $\lambda^0\geq 2$. We showed that $\lambda^0$ must be at least 3 in this case. It is easy to produce examples in all 3 subcases (a), (b), and (c) from the proof of \propref{prop:1=2}. We have the examples, in order,
$$f=(z_0^2-z_1^2+z_2^2)z_2, \hskip 0.2in g=(z_1^2-z_0^3+z_2)z_2,  \ \ \textnormal { and }\hskip 0.1in h=z_2^2-z_1^4-z_0z_1^3.$$
Note that, as $h$ is the ``suspension'' of  $z_1^4-z_0z_1^3$, there is no torsion in the cohomology of the Milnor fiber of $h$ even though \propref{prop:1=2} allows for it to exist.

\end{exm}

\medskip

\begin{exm} Consider the case where $\lambda^0=2$ and $\lambda^1\geq 3$. Examples of this are not so easy to produce, but they exist. For ease, let us denote our coordinates by $(x, y, z)$ in place of $(z_0, z_1, z_2)$.

Let $f = (z^2 - x^2-y^2)(z-x)$.  Then, we have $\Sigma f = V(y,z-x)$, and 
$$
V \left ( \frac{\partial f}{\partial z} \right ) = V(2z(z-x) + (z^2-x^2-y^2) ).
$$
We find, as cycles, that
$$
V \left ( \frac{\partial f}{\partial y}, \frac{\partial f}{\partial z} \right ) = V(y,3z+x) + 3 V(y,z-x),
$$
so that $\Gamma^1_{f, \mbf z} = V(y,3z+x)$ and $\Lambda^1_{f,\mbf z} = 3 V(y,z-x)$. It is now easy to show that $\lambda^0=2$ and $\lambda^1=3$.

\smallskip

In fact, we can show that we are in Case (1) of \propref{prop:0=2}. Up to analytic isomorphism, $f$ is the homogeneous polynomial $f = (zx-y^2)z$.  Consequently, $F_{f,\0}$ is diffeomorphic to $f^{-1}(1)$, i.e., to the space where $(zx-y^2)z=1$; thus $z\neq 0$ and $x=(y^2+1/z)/z$.  Thus, $F_{f,\0}$ is homotopy equivalent to $S^1$.
\end{exm}

 \medskip
 
 \begin{rem} Finally, consider the case where $\lambda^0=\lambda^1= 3$. {\bf Once again, we do not know if this case can actually occur.} \end{rem}

\medskip
 
 \section{Remarks}
 
 \medskip

\begin{rem}
Given the result of Siersma in \cite{siersmaisoline}, we suspected that \thmref{thm:bound} should be the correct generalization on the level of cohomology. Our original attack on proving such a generalization was to show that a non-empty relative polar curve necessarily implies that the internal monodromy in \propref{prop:inj} cannot be the identity. However, the example/counterexample of $f=z_2^2-z_1^3-z_0^2z_1^2$ shows that this is not true.
\end{rem}

\bigskip

\begin{rem} At the end of Example 5.1 of \cite{lemodtrace}, we address the case where $\lambda^1=2$,  $\lambda^0\neq 0$, and $\mult_\0|\Sigma f|=1$. There we concluded quickly that there can be no torsion in the cohomology of the Milnor fiber. While it is possible that this is true, certainly we did not give a careful argument that proved it. As of now, the best we can say is what we wrote in \propref{prop:1=2}.
\end{rem}

\bigskip

\begin{rem}
 Throughout this article, we have allowed for torsion in the cohomology of the Milnor fiber. One might wonder if this can actually occur when $\dim_\0\Sigma f=1$.
 
In fact there is a relatively recent example of such a phenomenon given in \cite{yoshinaga}, which is discussed in a very illuminating way by Suciu in 11.3 of \cite{suciuonyoshinaga}.  This example consists of 16 (reduced) hyperplanes through the origin in $\C^3$, and the critical locus consists of 45 lines. It is shown that there is $2$-torsion in the degree 1 homology group of the Milnor fiber, i.e., in the degree 2 cohomology group. Our results in this paper for functions with low L\^e numbers say essentially nothing about this example (though one can use the formulas in Proposition 1.4 of \cite{pervdual} to show that $\lambda^1=165$ and $\lambda^0=900$).

\smallskip
 
However we do not know if such torsion can possibly exist when $\Sigma f$ is a smooth curve or even has a single irreducible component.

  \end{rem}
 
 \bigskip
 
\bibliographystyle{plain}

\bibliography{Masseybib}

\begin{thebibliography}{10}

\bibitem{acamp}
{A'Campo, N.}
\newblock {Le nombre de Lefschetz d'une monodromie}.
\newblock {\em Proc. Kon. Ned. Akad. Wet., Series A}, 76:113--118, 1973.

\bibitem{BBD}
{Beilinson, A. A., Bernstein, J., Deligne, P.}
\newblock {\em {Faisceaux pervers}}, volume 100 of {\em {Ast\'erisque}}.
\newblock {Soc. Math. France}, 1981.

\bibitem{dimcasheaves}
{Dimca, A.}
\newblock {\em {Sheaves in Topology}}.
\newblock Universitext. Springer-Verlag, 2004.

\bibitem{fulton}
{Fulton, W.}
\newblock {\em {Intersection Theory}}, volume~2 of {\em Ergeb. Math.}
\newblock Springer-Verlag, 1984.

\bibitem{kashsch}
{Kashiwara, M., Schapira, P.}
\newblock {\em {Sheaves on Manifolds}}, volume 292 of {\em Grund. math.
  Wissen.}
\newblock Springer-Verlag, 1990.

\bibitem{lemassey}
{Le, D. T., Massey, D.}
\newblock {Hypersurface Singularities and Milnor Equisingularity}.
\newblock {\em Pure and Appl. Math. Quart., special issue in honor of Robert
  MacPherson's 60th birthday}, 2, no.3:893--914, 2006.

\bibitem{levar1}
{Massey, D.}
\newblock {The L\^e Varieties, I}.
\newblock {\em Invent. Math.}, 99:357--376, 1990.

\bibitem{levar2}
{Massey, D.}
\newblock {The L\^e Varieties, II}.
\newblock {\em Invent. Math.}, 104:113--148, 1991.

\bibitem{lecycles}
{Massey, D.}
\newblock {\em {L\^e Cycles and Hypersurface Singularities}}, volume 1615 of
  {\em Lecture Notes in Math.}
\newblock Springer-Verlag, 1995.

\bibitem{pervdual}
{Massey, D.}
\newblock {Perversity, Duality, and Arrangements in $\mathbb C^3$}.
\newblock {\em Top. and Appl.}, 73:169--179, 1996.

\bibitem{lemodtrace}
{Massey, D.}
\newblock {L\^e Modules and Traces}.
\newblock {\em Proc. AMS}, 134 (7):2049--2060, 2006.

\bibitem{siersmaisoline}
{Siersma, D.}
\newblock {Isolated line singularities}.
\newblock {\em Proc. Symp. Pure Math.}, 35, part 2, Arcata Singularities
  Conf.:485--496, 1983.

\bibitem{siersmavarlad}
{Siersma, D.}
\newblock {Variation mappings on singularities with a $1$-dimensional critical
  locus}.
\newblock {\em Topology}, 30:445--469, 1991.

\bibitem{suciuonyoshinaga}
{Suciu, A.}
\newblock {Milnor fibrations of arrangements with trivial algebraic monodromy}.
\newblock {\em Revue Roumaine Math. Pures. Appl.}, 69 (2):235--293, 2024.

\bibitem{yoshinaga}
{Yoshinaga, M.}
\newblock {Double coverings of arrangement complements and 2-torsion in Milnor
  fiber homology}.
\newblock {\em European. J. Math.}, 6:1097--1109, 2019.

\end{thebibliography}

\end{document}